\numberwithin{equation}{section}
\DeclareMathOperator{\F}{\mathbb{F}}
\DeclareMathOperator{\sgn}{sgn}
\DeclareMathOperator{\Span}{span}
\renewcommand{\Pr}[2][]{\mathbb{P}_{#1} \left\{ #2 \rule{0mm}{3mm}\right\}}
\newcommand{\ip}[2]{\langle#1,#2\rangle}
\def \N {\mathbb{N}}
\def \R {\mathbb{R}}
\def \XX {\mathcal{X}}
\def \a {\alpha}
\def \Plong {P_{\mathrm{long}}}
\def \Pshort {P_{\mathrm{short}}}
\newtheorem{theorem}{Theorem}[section]
\newtheorem{corollary}[theorem]{Corollary}
\newtheorem{lemma}[theorem]{Lemma}
\newtheorem{conjecture}[theorem]{Conjecture}
\newtheorem{definition}[theorem]{Definition}
\theoremstyle{remark}
\begin{document}

\title{Polynomial threshold functions, hyperplane arrangements, and random tensors}

\author{Pierre Baldi \and Roman Vershynin}
\date{\today}

\address{Department of Computer Science, University of California, Irvine}
\email{pfbaldi@uci.edu}

\address{Department of Mathematics, University of California, Irvine}
\email{rvershyn@uci.edu}

\thanks{Work in part supported by DARPA grant D17AP00002 to P. B. and U.S. Air Force grant FA9550-18-1-0031 to R. V}

\begin{abstract}
A simple way to generate a Boolean function is to take the sign of a real polynomial in $n$ variables. Such Boolean functions are called polynomial threshold functions. How many low-degree polynomial threshold functions are there? The partial case of this problem for degree $d=1$ was solved by Zuev in 1989, who showed that the number $T(n,1)$ of linear threshold functions satisfies $\log_2 T(n,1) \approx n^2$, up to smaller order terms. However the number of polynomial threshold functions
for any higher degrees, including $d=2$, has remained open. We settle this problem for all fixed degrees $d \ge1$, showing that $ \log_2 T(n,d) \approx n \binom{n}{\le d}$. The solution relies on connections between the theory of Boolean threshold functions, hyperplane arrangements, and random tensors. Perhaps surprisingly, it uses also a recent result of E.~Abbe, A.~Shpilka, and A.~Wigderson on Reed-Muller codes.
\end{abstract}

\maketitle

\setcounter{tocdepth}{1}
\tableofcontents

\section{Introduction}
\subsection{The problem}			\label{s: problem}
Neural networks and deep learning models and applications \cite{schmidhuber2015deep}
rely on a simplified neuronal model that goes back at least to the work of
W.~McCulloch and W.~Pitts in the 1940s \cite{mcculloch:43}. In this model, a neuron is viewed as a processing unit which, given $n$ inputs $x=(x_1,\ldots,x_n)$, produces an output of the form
$y=f(s)=f(\sum w_ix_i + t)$ where 
the coefficients $w_i$ represent the synaptic weights, $t$ is a threshold,
the weighted average $s$ is the activation, and $f$ is the transfer function. When the inputs are restricted to the Boolean cube $\{-1,1\}^n$ and $f$ is the sign function ($f=\sgn$), the neuron operates as a Boolean linear threshold function.

In search for both more powerful computational models, as well as greater biological realism that may take into account non-linear interactions between synapses along neuronal dendritic trees,
it is natural to replace the linear activation with a polynomial activation
$p(x)$ of degree $d$ so that $y=f(p(x))$
\cite{Baldi}. Again, considering the Boolean case, a Boolean function $f : \{-1,1\}^n \to \{-1,1\}$ is called a {\em polynomial threshold function} 
if it has the form: 
$$
f(x) = \sgn(p(x))
$$ 
for some real-valued polynomial $p : \R^n \to \R$ that has no roots in the Boolean cube
$\{-1,1\}^n$. 
Thus, $f$ takes the value $1$ at any point where the polynomial $p$ is positive, 
and $-1$ at any point where the polynomial $p$ is negative.
Up to a factor of two due to the $\sgn$ operation, polynomial threshold functions can be identified with partitions
of the Boolean cube $\{-1,1\}^n$ into two classes by polynomial surfaces corresponding to  $p(x)=0$. Figure~\ref{fig: partitions} illustrates the partitions of the two-dimensional cube obtained by linear and quadratic threshold functions, associated with polynomials of degree one and two respectively. {\it The main goal in this paper is to estimate the number of different polynomial threshold functions of degree $d$.}

\begin{figure}[htp]			
  \centering 
  \includegraphics[width=.6\textwidth]{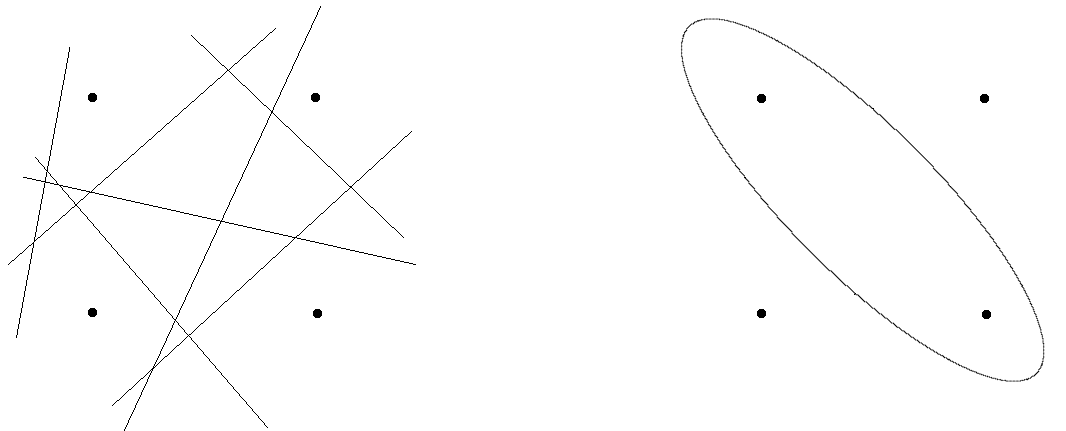} 
  \caption{On the left, all $7$ possible linear partitions of the Boolean square $\{-1,1\}^2$ are shown. 
  These define $14$ Boolean linear threshold functions 
  of two variables. On the right, using quadratic surfaces (ellipses), one can realize on additional partition, for a total of $8$ possible quadratic partitions. 
  These partitions are associated with $2 \cdot 8 = 16$ Boolean quadratic threshold functions 
  of two variables corresponding, in this case, to all possible Boolean functions of two variables.}
  \label{fig: partitions}
\end{figure}

\subsection{General background}			\label{s: background}
As previously indicated, the study of linear and polynomial threshold functions is implicit in some of the first models of neural activity by W.~McCulloch and W.~Pitts in the 1940s \cite{mcculloch:43}. Linear threshold functions were studied by T. Cover 
\cite{cover1965geometrical}, S. Muroga \cite{muroga1965lower},
M. Minsky and S. Papert in their book on perceptrons
\cite{Minsky-Papert}, and others in the 1960s. Since then, 
linear and polynomial threshold functions have been extensively used and studied in 
complexity theory, machine learning, and network theory; see, for instance, \cite{baldi87,Baldi,baldi88b,Bruck, Saks, Beigel, Krause-Pudlak, Sherstov, ABFR, BRS, ACW, Klivans-ODonnell-Servedio, Klivans-Servedio, DOSW, OS1, OS2, BGS, Kane}. An introduction to polynomial threshold functions
can be found in \cite[Chapter~5]{ODonnell book}, \cite[Chapter~4]{Anthony}, and \cite{Saks}. Linear and polynomial threshold functions remain a fundamental model for biological or neuromorphic neurons and, together with their continuous approximations, are at the center of all the current developments and applications of deep learning \cite{schmidhuber2015deep,baldireview2018}.

\subsection{The notion of capacity}	\label{s: capacity}

The standard description of the neuronal model given above focuses on the processing aspect of the neuron, its input-output function. However, there is a second, at least equally important aspect, which is the storage aspect. As described, a neuron is also a storage device capable of storing information in the linear or polynomial synaptic weights of its activation function. Thus it is natural to ask how many bits can be stored in a neuron. This question is intimately related to the recently-introduced 
notion of capacity
\cite{baldi2018neuronal,baldi2019capacity}  .

Given a class of functions $A$, such as all the functions that can be implemented by a neuron--or a network of neurons--as the synaptic weights are varied, we define the capacity of $C(A)$ as the binary logarithm $C(A)=\log_2 \vert A\vert$. In the discrete case, the only one to be considered here, $\vert A \vert $ is simply the number of functions in $A$ (in the continuous case, one must define a notion of volume). Thus the capacity is the number of bits required to specify an element of $A$. Remarkably, when $A$ is associated with a neural architecture comprising one or many neurons, the capacity can be viewed as the number of effective bits that can be extracted from a training set and stored in the neural architecture\cite{baldi2019capacity}. Estimating the capacity of a single linear threshold function has a long history reviewed below and, recently, we were able to estimate the capacity of networks of linear threshold function. Thus, the primary goal here is to begin extending these results beyond the linear case by estimating the capacity of a single polynomial threshold function.

\subsection{The capacity of linear threshold functions}	\label{s: linear}

There are $2^{2^n}$ Boolean functions of $n$ variables. Let $T(n,d)$ denote the number of polynomial threshold functions of fixed degree $d$. 
The asymptotical behavior of $T(n,d)$ has been known only for the linear case $d=1$. 
The work of T. Cover 
\cite{cover1965geometrical} and others used a simple hyperplane counting argument to show that $T(n,1)$ is upperbounded by $2^{n^2}$. Recursive constructions by S. Muroga \cite{muroga1965lower} and others provided lower bounds of the form $2^{\alpha n^2}$ with values of $\alpha$ that were significantly below 1.  

Yu.~Zuev \cite{Zuev Doklady, Zuev} was finally able to show in 1989 that the upper bound 
$2^{n^2}$ is asymptotically tight: the number of linear threshold functions satisfies:
\begin{equation}	\label{eq: Zuev}
\Big(1 - \frac{10}{\log n} \Big) \cdot n^2
\le \log_2 T(n,1) 
\le n^2.
\end{equation}
J.~Kahn, J.~Koml\'os, E.~Szemer\'edi \cite[Section~4]{KKS}
further improved this result to:
$$
\log_2 T(n,1) = n^2 - n \log_2 n \pm O(n).
$$
Although Zuev's breakthrough led to some progress in understanding linear threshold functions (see e.g. \cite{Ojha, Zunic, DSTW, Irmatov, Irmatov2, KV}), the same problem for higher degrees has remained open. M.~Saks explicitly asked about the asymptotical behavior of $T(n,d)$ in 1993 \cite[Problem~2.35]{Saks}. 
Even the asymptotic behavior of the number of quadratic threshold functions $T(n,2)$ has so far remained unknown. 

\subsection{The capacity of polynomial threshold functions: the main result}	\label{s: polynomial}

Each of the $2^{2^n}$ Boolean functions of $n$ variables can be expressed as a polynomial of degree at most $n$: to see this, write the function $f$ in conjunctive (or disjunctive) normal form, or take the Fourier transform of $f$. In particular, every Boolean function $f$ is a polynomial threshold function, but the polynomial that represents $f$ often has high degree. A conjecture of J.~Aspnes {\em et al.} \cite{ABFR} and C.~Wang and A.~Williams \cite{Wang-Williams} states that, for most Boolean functions $f(x)$, the lowest degree of $p(x)$ such that $f(x) = \sgn(p(x))$ is either $\lfloor n/2 \rfloor$ or $\lceil n/2 \rceil$. M.~Anthony \cite{Anthony classification} and independently N.~Alon (see \cite{Saks}) proved one half of this conjecture, showing that for most Boolean functions the lower degree of $p(x)$ is at least $\lceil n/2 \rceil$. The other half of the conjecture was settled, in an approximate sense, by R.~O'Donnell and R.~A.~Servedio \cite{OS1} who gave an upper bound $n/2 + O(\sqrt{n \log n})$ on the degree of $p(x)$. 

While low degree polynomial threshold functions may be relatively rare within the space of Boolean functions, they are of particular interest both theoretically and practically, due to their functional simplicity and their utilization in biological modeling and neural network applications. Thus the most fundamental open question regarding low-degree polynomial threshold functions is: 
\begin{quote}
{\em How many low-degree polynomial threshold functions are there?
  Equivalently, how many different ways are there to partition the Boolean cube 
  by polynomial surfaces of low degree?}
  Equivalently how many bits can effectively be stored in the coefficients of a polynomial threshold function? 
\end{quote}
In the following theorem, we settle the problem for all fixed degrees $d \in \N$.

\begin{theorem}			\label{thm: main}
  For any positive integers $n$ and $d$ such that $1 \le d \le n^{0.9}$, 
  the number of Boolean polynomial threshold functions $T(n,d)$ satisfies\footnote{Here and
  	in the rest of the paper, $\binom{n}{\le d}$ denotes the binomial sum up to term $d$, 
	i.e. $\binom{n}{\le d} \coloneqq \binom{n}{0} + \binom{n}{1} + \cdots + \binom{n}{d}$.}
  $$
  \Big( 1 - \frac{C}{\log n} \Big)^d \cdot n \binom{n}{\le d}
  \le \log_2 T(n,d) 
  \le n \binom{n}{\le d}.
  $$ 
\end{theorem}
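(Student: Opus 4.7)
Set $N := \binom{n}{\le d}$. Because $x_i^2 = 1$ on the Boolean cube, every polynomial of degree $\le d$ has a unique multilinear representative, so the space of relevant polynomials is $\R^N$ with basis $\{x^T : T \subseteq [n],\ |T|\le d\}$. For each $x \in \{-1,1\}^n$, the evaluation $p\mapsto p(x)$ is a linear functional on $\R^N$, vanishing on a hyperplane $H_x \subset \R^N$ through the origin. Polynomial threshold functions of degree $\le d$ correspond bijectively to the open cells of the central arrangement $\{H_x\}_{x \in \{-1,1\}^n}$.

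\emph{Upper bound.} A central arrangement of $m$ hyperplanes in $\R^N$ has at most $2\binom{m-1}{\le N-1} \le (e\cdot m/N)^N$ open cells (Schl\"afli). With $m=2^n$, taking $\log_2$ gives $nN + N\log_2(e/N)$, which is at most $nN$ once $N \ge e$---automatic here since $N \ge n+1 \ge 3$ throughout the range $1 \le d \le n^{0.9}$.

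\emph{Lower bound.} Take a random polynomial $p_\xi(x) = \sum_{|T|\le d} \xi_T x^T$ with i.i.d.\ symmetric (say Gaussian) coefficients, and let $\sigma_\xi \in \{-1,1\}^{2^n}$ be its sign pattern. Distinct $\sigma_\xi$ correspond to distinct polynomial threshold functions, so $\log_2 T(n,d) \ge H(\sigma_\xi)$, and the task reduces to showing $H(\sigma_\xi) \ge (1-C/\log n)^d\, nN$. I would fix a carefully constructed ordering $x_1, x_2,\ldots,x_{2^n}$ of the Boolean cube and expand via the chain rule $H(\sigma_\xi) = \sum_i H(\sigma_\xi(x_i) \mid \sigma_\xi(x_{<i}))$. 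Conditional on the previous signs, $\xi$ is supported on a convex cone $C_{<i} \subset \R^N$, and $\sigma_\xi(x_i) = \sgn\langle \xi, V_d(x_i)\rangle$ where $V_d(x) := (x^T)_{|T|\le d} \in \R^N$. Each conditional entropy is close to $1$ exactly when the conditional distribution of $\langle \xi, V_d(x_i)\rangle$ on $C_{<i}$ is well-spread around zero---a quantitative anti-concentration statement, not merely a non-degeneracy one.

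\emph{Role of Abbe--Shpilka--Wigderson and random tensors.} The ASW theorem provides the linear-algebraic backbone: a typical subset of $\{-1,1\}^n$ of size just above $N$ has its $V_d$-evaluation vectors spanning all of $\R^N$, so no small stratum of the cube can exhaust the span. Interpreted iteratively with a quantitative rank gap, this yields an ordering in which, for a $(1-C/\log n)^d$ fraction of the first $\approx nN$ indices $i$, the vector $V_d(x_i)$ has a substantial component pointing \emph{out} of the subspace effectively constrained by the earlier signs. Combining this with a small-ball / anti-concentration estimate for random multilinear Gaussian or Rademacher polynomials---the ``random tensor'' ingredient---forces the conditional bias of $\langle\xi, V_d(x_i)\rangle$ to be small, and hence the conditional entropy to be $\ge 1 - o(1)$, for that fraction of indices. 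Summing the chain-rule terms yields $H(\sigma_\xi) \ge (1-C/\log n)^d \, nN$.

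\emph{Main obstacle.} Plain rank genericity from ASW ensures the conditional sign is not deterministic, but does not by itself control its bias. The crux of the argument is to upgrade the ASW linear-algebraic statement to the sharp quantitative anti-concentration needed to push the conditional entropies close to $1$, while tracking the multiplicative $(1-C/\log n)^d$ loss across the iteration over degrees. This bridging step---between spectral/rank information about restricted Reed--Muller generator matrices and small-ball estimates for random symmetric tensors acting as multilinear forms on the Boolean cube---is, I expect, where most of the technical work lies.
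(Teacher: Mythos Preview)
Your upper bound matches the paper's exactly: linearize via the tensor lift $x\mapsto x^{\le d}$ and apply the Schl\"afli bound for central arrangements.

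Your lower bound is a genuinely different route, and it has a real gap precisely at the step you flag. The paper does not use entropy or a chain rule. Instead it invokes Zuev's lower bound for arrangements: the number of regions is at least the number of distinct intersection subspaces, equivalently the number of distinct linear spans $\Span\{V_d(x): x\in S\}$ as $S$ ranges over subsets of the cube. The core of the paper is then a \emph{resilience} strengthening of ASW: for random $x_1,\ldots,x_m$ with $m$ slightly below $N$, with high probability $\Span\{V_d(x_1),\ldots,V_d(x_m)\}$ contains no other vector of the form $V_d(u)$, $u\in\{-1,1\}^n$. This is proved by combining ASW with the Erd\H{o}s--Littlewood--Offord lemma applied coordinatewise, splitting into ``long'' and ``short'' linear combinations. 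Resilience immediately gives that most $m$-element subsets yield pairwise distinct spans, so $T(n,d)\ge \tfrac12\binom{2^n}{m}$ with $m=\binom{n-Cn/\log n}{\le d}$, and taking logarithms finishes.

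The gap in your approach is structural, not merely technical. After conditioning on $i$ previous signs, $\xi$ is confined to a polyhedral cone $C_{<i}$, and the conditional distribution of $\langle\xi,V_d(x_i)\rangle$ depends on the \emph{shape} of that cone, not just on whether $V_d(x_i)$ lies in the span of the earlier evaluation vectors. Having $V_d(x_i)$ outside that span guarantees only that the conditional sign is nondeterministic (entropy $>0$); it does nothing to force the entropy near $1$, since a thin cone can sit almost entirely on one side of a generic hyperplane. To push the conditional entropy to $1-o(1)$ you would need a statement like ``most cells of the partial arrangement have comparable Gaussian mass on both sides of $H_{x_i}$''---a claim about the geometry of the cells, not about rank or about small-ball probabilities of multilinear forms in the \emph{unconditioned} Gaussian. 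Neither ASW nor the standard polynomial anti-concentration tools (Carbery--Wright, Meka--Nguyen--Vu, etc.) address this: they bound the law of $p(\xi)$ under the full Gaussian measure, not under $\xi$ restricted to an arbitrary cone carved out by $\Theta(nN)$ half-spaces. So the ``bridging step'' you anticipate is the entire difficulty, and the named ingredients do not supply it. The paper avoids the issue altogether by counting spans rather than analyzing cones.
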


In this theorem and the rest of the paper, $C$ denotes a positive absolute constant;
its value does not depend on $n$ or $d$. The exact value of $C$ may be different in 
different parts of this paper.
For linear threshold functions, i.e. for $d=1$, Theorem~\ref{thm: main} yields Zuev's result \eqref{eq: Zuev} up to the absolute constant $C$.

The upper bound in Theorem~\ref{thm: main} holds for all $1 \le d \le n$; 
this bound is known and can be derived from
counting regions in hyperplane arrangements; we reprove it in Section~\ref{s: linearization}
for completeness. 
The lower bound in Theorem~\ref{thm: main} is new. 
It will be clear from the argument that the exponent $0.9$ in the constraint on $d$ 
can be replaced by any constant strictly less than $1$ at the cost of changing the absolute constant $C$. 

For small degrees $d$, namely for $d = o(\log n)$, the factor $(1-C/\log n)^d$ becomes 
$1-o(1)$ and Theorem~\ref{thm: main} yields in this case the asymptotically tight bound on the capacity:
$$
\log_2 T(n,d) = \left( 1-o(1) \right) \cdot n \binom{n}{\le d}.
$$
To better understand this bound, note that a general polynomial 
of degree $d$ has $\binom{n}{\le d}$ monomial terms. 
Thus, Theorem~\ref{thm: main} yields the following result on communication complexity, and learning: 
\begin{quote}
  {\em To communicate a polynomial threshold function, one needs to spend approximately $n$ bits per monomial term. During learning, approximately $n$ bits can be stored per monomial term.}
\end{quote}

In some situations, it may be desirable to have a simpler estimate of $T(n,d)$ that is free of binomial sums. For this purpose, we can simplify the conclusion of Theorem~\ref{thm: main} and state it as follows: 

\begin{corollary}			\label{cor: main}
  For any integers $n$ and $d$ such that $n > 1$ and $1 \le d \le n^{0.9}$, 
  the number of Boolean polynomial threshold functions $T(n,d)$ satisfies:
  $$
  \Big( 1 - \frac{C}{\log n} \Big)^d \cdot \frac{n^{d+1}}{d!}
  < \log_2 T(n,d) 
  < \frac{n^{d+1}}{d!}.
  $$ 
\end{corollary}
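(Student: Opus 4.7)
The plan is to deduce Corollary~\ref{cor: main} from Theorem~\ref{thm: main} by replacing the binomial sum $\binom{n}{\le d}$ with the cleaner quantity $n^d/d!$, absorbing the resulting error into the $(1 - C/\log n)^d$ factor on the lower side and into a slight sharpening of the hyperplane-counting bound on the upper side.

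For the lower bound, I would start from the theorem and use
$$\binom{n}{\le d} \ge \binom{n}{d} = \frac{n^d}{d!} \prod_{j=1}^{d-1}\Big(1 - \frac{j}{n}\Big) \ge \frac{n^d}{d!}\, e^{-d^2/n},$$
the last inequality by $\log(1-x) \ge -2x$ on $[0, 1/2]$. Under $d \le n^{0.9}$ the exponent $d^2/n \le n^{0.8}$ is of smaller order than $d/\log n \ge n^{0.9}/\log n$ for $n$ large, so the factor $e^{-d^2/n}$ can be absorbed into $(1 - C/\log n)^d$ at the cost of enlarging the constant $C$ by a bounded amount. Renaming the constant back to $C$ gives the claimed lower bound.

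For the upper bound the subtlety is that $\binom{n}{\le d}$ actually slightly exceeds $n^d/d!$ for small $d$: a direct expansion gives $\binom{n}{\le d}/(n^d/d!) = 1 + d(3-d)/(2n) + O(1/n^2)$, which is strictly greater than $1$ for $d \in \{1,2,3\}$ (and only slightly less for $d \ge 4$). Hence the printed form $\log_2 T \le n\binom{n}{\le d}$ is not directly below $n^{d+1}/d!$. To close the gap I would reopen the hyperplane-counting argument of Section~\ref{s: linearization} and retain its Zaslavsky/Schl\"afli form before the final simplification, namely
$$\log_2 T(n,d) \le N\bigl(n - \log_2 N\bigr) + O(N), \qquad N = \binom{n}{\le d}.$$
Since $Nn \le n^{d+1}/d! + O(d \cdot n^d/d!)$ and the subtracted $N \log_2 N$ is at least $N \cdot d \log_2(n/d)$, the subtraction dominates the excess whenever $\log_2(n/d)$ is bounded below by a constant, which holds for $d \le n^{0.9}$ and $n$ sufficiently large. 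The finitely many remaining small $(n,d)$ pairs are verified by direct computation (using, for $d=1$, the sharper bound $\log_2 T(n,1) \le n^2 - n\log_2 n + O(n)$ of Kahn--Koml\'os--Szemer\'edi).

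The hard part is the upper bound: one cannot simply invoke $\log_2 T \le n\binom{n}{\le d}$ from the theorem, since this is $O(d \cdot n^d/d!)$ too weak, and one genuinely needs the $-N\log_2 N$ correction from the hyperplane-counting proof --- precisely the term dropped in the theorem's clean form --- to reach strict inequality against $n^{d+1}/d!$.
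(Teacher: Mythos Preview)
Your proposal is correct and follows the same strategy as the paper's proof. For the lower bound, both you and the paper pass from $\binom{n}{\le d}$ to $\binom{n}{d}$ and then to $n^d/d!$, absorbing the defect $\prod_{j<d}(1-j/n)$ (you write it as $e^{-d^2/n}$, the paper as $(1-d/n)^d$) into the factor $(1-C/\log n)^d$ via the inequality $d/n \le C/\log n$, valid in the range $d\le n^{0.9}$. For the upper bound, you correctly diagnose the obstruction---that $n\binom{n}{\le d}$ genuinely exceeds $n^{d+1}/d!$ for $d\le 3$---and, exactly as the paper does, return to the sharper Schl\"afli/Zaslavsky form of Theorem~\ref{thm: upper bound sharp} to recover the subtractive term $-m\log_2 m$.

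The only substantive difference is in the execution of the upper bound. You give a single asymptotic argument for $n$ large (using $N\le \tfrac{n^d}{d!}(1+O(d/n))$ and $\log_2 N \ge d\log_2(n/d)$, which for $d\le n^{0.9}$ makes the subtraction dominate the $O(d\,n^d/d!)$ excess) and relegate small $(n,d)$ to a finite numerical check. The paper instead proves the strict inequality for the full range $1\le d\le n$, which forces a five-region case split: one region where $n^{d+1}/d! > 2^n$ trivially, two asymptotic regions handled via Lemma~\ref{lem: binomial sums} and explicit constants (with the critical ratio $n/d\approx 3.05$), and two finite regions verified by computer. Your approach is cleaner but yields slightly less; the paper's is longer but establishes the upper bound in Corollary~\ref{cor: main} for all $d\le n$, not just $d\le n^{0.9}$. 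The appeal to Kahn--Koml\'os--Szemer\'edi for $d=1$ is unnecessary: the Schl\"afli bound itself already suffices once the $-m\log_2 m$ term is kept.
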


The upper bound in Corollary~\ref{cor: main} actually holds for all $n > 1$, $1 \le d \le n$.
We derive Corollary~\ref{cor: main} from Theorem~\ref{thm: main} in Section~\ref{s: corollary} 
by a careful analysis of the underlying binomial sums. 

For small degrees $d$, namely for $d = o(\log n)$, the factor $(1-C/\log n)^d$ becomes 
$1-o(1)$ and Corollary~\ref{cor: main} yields in this case the asymptotically tight bound on the capacity:
$$
\log_2 T(n,d) = \left( 1-o(1) \right) \cdot \frac{n^{d+1}}{d!}.
$$

\subsection{Prior work}			\label{s: prior work}
Prior to our work, an upperbound on $T(n,d)$ that scales like $n^{d+1}/d!$ was known  \cite{Baldi,Anthony}. The  upperbounds given in Theorem~\ref{thm: main} and Corollary~\ref{cor: main} are more precise and general. 
The best known lower bound was given by M.~Saks \cite{Saks} in 1993:
\begin{equation}	\label{eq: Saks}
\log_2 T(n,d) \ge \binom{n}{d+1}.
\end{equation}
For all small degrees $d$, there is a multiplicative gap of size approximately $d+1$
between the optimal upper bound $n \binom{n}{\le d}$ in Theorem~\ref{thm: main}
and the lower bound \eqref{eq: Saks}.
For example, $T(n,2) \le n \binom{n}{\le 2} \approx n^3/2$ but 
\eqref{eq: Saks} only gives $T(n,2) \ge n^3/6$.
Our work closes this gap.

The asymptotically sharp result \eqref{eq: Zuev} about linear threshold functions 
has a remarkably short proof \cite{Zuev}.
It quickly follows from a combination of two results, one in enumerative combinatorics
and the other one in probability. The combinatorial result is a consequence of Zaslavsky's formula
for {\em hyperplane arrangements} \cite{Zaslavsky}, and the probabilistic result is Odlyzko's theorem
on spans of {\em random vectors} with independent $\pm 1$ coordinates \cite{Odlyzko}. 
Odlyzko's theorem is a stronger, {\em resilience} version of known results on the {\em singularity of random matrices}, results that state that a random matrix with $\pm 1$ entries has full rank with high probability. The original results on the singularity problem are due to J.~Koml\'os \cite{Komlos1, Komlos2}. More recently, the singularity problem has been actively studied in random matrix theory. A significant number of extensions and improvements on the result of J.~Koml\'os are now available, see e.g. \cite{KKS, Tao-Vu RSA, Costello-Tao-Vu, Tao-Vu JAMS, Rudelson Annals, Tao-Vu Annals, RV square, RV upper, Adamczak-et-al, RV rectangular, Tao-Vu GAFA, Bourgain-Wood-Vu, RV ICM, Nguyen EJP, Nguyen SIAM, V symmetric, RV JAMS, GNT, Basak-Rudelson, Tikhomirov Advances, Tikhomirov Israel, Litvak-et-al, Cook, Tikhomirov IMRN}, culminating in the very recent 
proof by K.~Tikhomirov 
\cite{Tikhomirov singularity}
providing optimal estimates for the probability 
that a random $\pm 1$ matrix be singular.

\subsection{Our approach}		
Zuev's approach can be extended from linear threshold functions to polynomial threshold functions 
by lifting the problem into the tensor product space
$(\R^n)^{\otimes d}$. The combinatorial part of the argument generalizes
without any problem, but the probabilistic part is less obvious, because 
the theory of random tensors is not sufficiently developed yet. In \cite{baldi2018booleanv1} we developed some of theory in order to prove a version of Theorem \ref{thm: 
main}, but recently found a theorem by
E.~Abbe, A.~Shpilka, and A.~Wigderson \cite{ASW}
on the singularity of random tensors, developed in the context of Reed-Muller codes, that allows one to simplify the proof. The forthcoming paper \cite{V random tensors} gives an alternative, more general and quantitative, approach to the singularity problem for random tensors.
In the current paper, we prove a version of Odlyzko's resilience result \cite{Odlyzko} for 
random tensors.
Our proof of resilience is inspired by Odlyzko's proof and it uses the result of \cite{ASW}. 
The argument is non-trivial due to the lack of independence -- even if the entries of a random vector $x$ are stochastically independent, the entries of the random tensor $x^{\otimes d}$ are not.

\subsection{Outline of paper}
The rest of the paper is devoted to the proof of Theorem~\ref{thm: main}.
The next two sections deal with the combinatorial part of the argument. 
In Section~\ref{s: hyperplane arrangements}, we explain a canonical correspondence 
between linear threshold functions and hyperplane arrangements, and 
we discuss known bounds on the number of regions determined by hyperplane arrangements.
In Section~\ref{s: linearization}, we linearize polynomial threshold
functions by lifting them into the tensor product space, which then reduces
polynomial threshold functions to linear threshold functions and the corresponding hyperplane arrangements.
This allows us to quickly prove the (known) upper bound in Theorem~\ref{thm: main}
at the end of Section~\ref{s: linearization}.

Next we turn to the probabilistic part of the argument, in order to derive the lower bound. 
In Section~\ref{s: random tensors}, we explain 
the result of E.~Abbe, A.~Shpilka, and A.~Wigderson \cite{ASW} on the linear
independence of random tensors (Theorem~\ref{thm: ASW}), 
and state a new resilience version of this result (Theorem~\ref{thm: resilience}).
In Section~\ref{s: lower bound}, using this resilience version, we derive the lower bound of Theorem~\ref{thm: main}. In Sections~\ref{s: Littlewood-Offord} and  
\ref{s: resilience}, we prove the resilience result for random tensors. 
The argument uses the Littlewood-Offord Lemma for sums of independent 
random variables, so we explain the Littlewood-Offord lemma in 
Section~\ref{s: Littlewood-Offord}. 
In Section~\ref{s: resilience}, we prove the resilience result (Theorem~\ref{thm: resilience}), 
thus completing the entire argument.
In Section~\ref{s: corollary}, we deduce Corollary~\ref{cor: main}. 
In Section~\ref{s: further questions}, we describe several possible extensions and related open questions.

\subsection*{Acknowledgements}
The authors are grateful to Michael Forbes who drew their attention to the 
work of E.~Abbe, A.~Shpilka, and A.~Wigderson \cite{ASW}, which lead 
to a great simplification of the original proof of Theorem~\ref{thm: main}. 
The authors also thank the anonymous reviewers for their useful comments and suggestions.

\section{Hyperplane arrangements}		\label{s: hyperplane arrangements}

There is a natural correspondence between threshold functions and regions of 
hyperplane arrangements, a classical topic in enumerative combinatorics that has been studied 
for decades \cite{Zaslavsky}; see \cite{Stanley}, \cite[Section 6]{Matousek}. 
To see the connection, let us fix a finite subset 
$S \subset \R^n \setminus \{0\}$ and consider all {\em homogeneous linear threshold functions} on $S$, 
i.e. functions $f : S \to \R$ of the form 
$$
f_a(x) = \sgn(\ip{a}{x})
$$
where $a \in \R^n$ is a fixed vector.
Consider the collection (``arrangement'') of hyperplanes 
$$
\{ x^\perp :\; x \in S \},
$$ 
where $x^\perp = \{z \in \R^n :\; \ip{z}{x}=0\}$ is the hyperplane through the origin with normal vector $x$. 
Two vectors $a$ and $b$ define the same homogeneous linear threshold function $f_a = f_b$ 
if and only if $a$ and $b$ 
lie on the same side of each of these hyperplanes. 
In other words, $f_a = f_b$ if and only if $a$ and $b$ lie in the 
same open component of the partition of $\R^n$ created by the hyperplanes $x^\perp$, with $x\in S$. 
Such open components are called the {\em regions} of the hyperplane arrangement (
Figure~\ref{fig: functions=regions}).
Thus we have the following lemma:

\begin{figure}[htp]			
  \centering 
  \includegraphics[width=.4\textwidth]{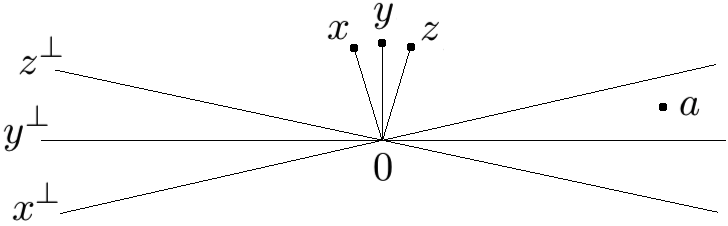} 
  \caption{The set of points $S = \{x,y,z\}$ in the plane defines the arrangement 
  	of hyperplanes (lines in this cases) $x^\perp, y^\perp, z^\perp$. 
	These hyperplanes partition the plane into six regions.
	Each region defines a different linear threshold function on $S$
	by the rule $f_a(x) = \sgn(\ip{a}{x})$, where $a$ is any point taken from that region.}
  \label{fig: functions=regions}
\end{figure}

\begin{lemma}[Threshold functions and hyperplane arrangements]	
		\label{lem: functions vs hyperplanes} 
  The number of homogeneous linear threshold functions on a given finite set 
  $S \subset \R^n \setminus \{0\}$
  equals the number of regions of the hyperplane arrangement $\{ x^\perp :\; x \in S \}$. 
\end{lemma}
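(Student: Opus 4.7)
The plan is to exhibit an explicit bijection between the set of regions of the hyperplane arrangement $\{x^\perp : x \in S\}$ and the set of homogeneous linear threshold functions on $S$. Throughout, I will restrict attention to vectors $a \in \R^n$ satisfying $\ip{a}{x} \neq 0$ for every $x \in S$; equivalently, $a$ lies in the complement $\R^n \setminus \bigcup_{x \in S} x^\perp$, which is exactly the union of the regions of the arrangement. (If one allows $a$ to lie on some $x^\perp$, then $f_a$ takes the value $0$ at that $x$, and this case should be excluded so that $f_a$ is a genuine $\{-1,+1\}$-valued function.)

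First, I would check that the assignment $a \mapsto f_a$ is constant on each region. This is the geometric content of the setup: a region $R$ is by definition a connected component of the complement of the arrangement, and as $a$ varies continuously inside $R$ the quantity $\ip{a}{x}$ varies continuously and never vanishes (since $a$ does not cross $x^\perp$), so its sign is constant. Hence $f_a$ depends only on the region containing $a$, and the map descends to a well-defined function from regions to homogeneous linear threshold functions on $S$.

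Next, I would verify injectivity and surjectivity. For injectivity, suppose $R_1 \neq R_2$ are two distinct regions and pick $a \in R_1$, $b \in R_2$. Because the regions are the components of $\R^n \setminus \bigcup_{x \in S} x^\perp$, there must be at least one $x \in S$ for which $a$ and $b$ lie on opposite sides of $x^\perp$; for such an $x$, $\sgn(\ip{a}{x}) \neq \sgn(\ip{b}{x})$, so $f_a \neq f_b$. For surjectivity, any homogeneous linear threshold function arises as $f_a$ for some $a$ with $\ip{a}{x} \neq 0$ on $S$, and such an $a$ lies in some region.

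There is no real obstacle here — the argument is essentially the definition of a region unwound — and I expect the only point that requires a word of care is the convention about the definition of homogeneous linear threshold function, namely ruling out the degenerate vectors $a$ that lie on one of the hyperplanes $x^\perp$. With that convention fixed, the bijection above gives the lemma.
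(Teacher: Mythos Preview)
Your proof is correct and follows essentially the same approach as the paper: the paper simply observes (in the paragraph preceding the lemma) that $f_a = f_b$ if and only if $a$ and $b$ lie on the same side of every hyperplane $x^\perp$, i.e.\ in the same region of the arrangement, and then states the lemma as an immediate consequence. You have written out the same argument with more care, explicitly verifying well-definedness, injectivity, and surjectivity of the map from regions to threshold functions.
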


This leads us to the following general question: how many regions are there in a given hyperplane arrangement?
An exact formula was found by Zaslavsky \cite{Zaslavsky}; see \cite{Stanley}. It expresses the number of regions via the M\"obius function of the poset of the intersection spaces associated with the hyperplanes. Computing the M\"obius function, however, may be a challenging task. 
Nevertheless, there are convenient bounds on the M\"obius function, which yields the following result:

\begin{lemma}[Counting regions of hyperplane arrangements]	\label{lem: Zaslavsky}
  Consider an arrangement of $p$ affine hyperplanes in $\R^m$, where $p \ge m$.
  Let $r(p,m)$ denote the number of regions of this arrangement.  
  \begin{enumerate}[ 1.]
    \item We have:
      \begin{equation}			\label{eq: general arrangements}
      r(p,m) \le \binom{p}{\le m}.
      \end{equation}
    \item \label{part: Zuev arrangements} 
    $r(p,m)$ is bounded below by the number of all intersection subspaces\footnote{An intersection subspace in this lemma refers to the intersection of any subfamily of the original hyperplanes. 
The dimensions of an intersection subspace may range from zero (a single point) to $m$ (intersecting an empty set of hyperplanes gives the entire space $\R^m$).
} defined by the hyperplanes. 
    \item If the hyperplanes are in general position, 
    then the upper and lower bounds are the same, and each bound becomes an equality. 

    \item \label{part: central}
    If all hyperplanes are central, i.e. pass through the same point, then
    the upper bound improves to
	\begin{equation}			\label{eq: central arrangements}
	r(p,m) \le 2 \binom{p-1}{\le m-1}.
	\end{equation}
     \item If the normal vectors to the hyperplanes are in general position, 
       then the inequality in \eqref{eq: central arrangements} becomes an equality. 
  \end{enumerate}
\end{lemma}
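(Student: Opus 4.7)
\medskip
\noindent\textbf{Proof plan.} I would treat the five claims in order. The two main engines are a sweeping induction paired with Pascal's identity (for parts (1) and (4)) and Zaslavsky's formula for the number of regions in terms of the M\"obius function on the intersection poset (for part (2)); parts (3) and (5) then fall out by direct counts in the general-position case.

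For part (1), I would induct on $p$, with the trivial base case $r(0,m)=1=\binom{0}{\le m}$. Pick any hyperplane $H$ from the arrangement; by induction the remaining $p-1$ hyperplanes cut $\R^m$ into at most $\binom{p-1}{\le m}$ regions. Re-inserting $H$ splits precisely those regions that it crosses, and these are in bijection with the regions of the induced arrangement of $p-1$ hyperplanes on $H\cong\R^{m-1}$, of which there are at most $\binom{p-1}{\le m-1}$ by induction. Pascal's identity $\binom{p-1}{\le m}+\binom{p-1}{\le m-1}=\binom{p}{\le m}$ closes the induction. For part (2), I would cite Zaslavsky's theorem $r(p,m)=\sum_F |\mu(\hat 0, F)|$, where the sum ranges over intersection flats; since $|\mu(\hat 0, F)|\ge 1$ for every flat, the number of regions is at least the number of flats. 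Part (3) is then immediate: in general position, any $k\le m$ hyperplanes meet in a flat of codimension exactly $k$, so distinct subsets produce distinct flats, giving exactly $\binom{p}{\le m}$ flats and sandwiching $r(p,m)$ between matching lower and upper bounds.

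For part (4), I would translate the common point to the origin, so that each region becomes an open polyhedral cone; since no region contains $0$, the regions pair up under $x\mapsto -x$. Fixing the hyperplane $H_1=a_1^\perp$ and the affine slice $H_1'=\{x:\langle a_1,x\rangle=1\}$, each region lying in the half-space $\{\langle a_1,x\rangle>0\}$ meets $H_1'$ in a single region of the induced affine arrangement $\{H_i\cap H_1':i\ge 2\}$ in $H_1'\cong\R^{m-1}$, and the correspondence is a bijection. Part (1) bounds the latter by $\binom{p-1}{\le m-1}$, and antipodal symmetry doubles the count to produce \eqref{eq: central arrangements}. For part (5), the same slicing applies, but now the traces $H_i\cap H_1'$ are themselves in general position in $H_1'$, so part (3) converts the bound into an equality.

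The step that will require the most care is part (2): my plan invokes Zaslavsky's theorem as a black box, which is standard but nontrivial to derive. A more self-contained route would construct an explicit injection from intersection flats to regions, for example by fixing a generic direction $v$ and associating to each flat $F$ a small perturbation in direction $v$ of a relative-interior point of $F$; verifying that distinct flats land in distinct regions, uniformly across codimensions, would be the technical heart of that alternative approach.
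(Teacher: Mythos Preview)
Your plan is correct and aligns with the approaches the paper itself points to. The paper does not give a self-contained proof of this lemma; it simply records that parts (1) and (4) can be obtained either from Zaslavsky's formula or by direct induction (citing Buck, Matou\v{s}ek, Schl\"afli, Wendel, Anthony), and that part (2) follows from Zaslavsky's formula together with the fact that the M\"obius function of a geometric lattice never vanishes (citing Rota and Zuev). Your sweeping induction for (1) and your use of $r=\sum_F|\mu(\hat 0,F)|\ge \#\{\text{flats}\}$ for (2) are exactly these routes.

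The one place where you deviate slightly is part (4). The classical Schl\"afli argument the paper cites proceeds by induction on $p$: adding the $p$-th central hyperplane increases the region count by the number of regions it is cut into by the other $p-1$ hyperplanes, which is a central arrangement of $p-1$ hyperplanes in $\R^{m-1}$, giving the recursion $r(p,m)=r(p-1,m)+r(p-1,m-1)$. Your argument instead exploits the antipodal symmetry $x\mapsto -x$ to halve the count and then slices once with an affine translate of one of the hyperplanes, reducing directly to part (1) in dimension $m-1$. Both arguments are short and yield the same bound; yours has the minor advantage of not re-running an induction, while Schl\"afli's has the advantage of giving the exact recursion even when the normals are not in general position. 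Your verification for part (5), that the traces $H_i\cap H_1'$ are in general position in $H_1'$ whenever the normals $a_1,\ldots,a_p$ are, is correct: any $m-1$ traces meet in a point because the corresponding $m$ normals (including $a_1$) are independent, and any $m$ traces have empty intersection because the $m$ linear hyperplanes already meet only at the origin, which lies off $H_1'$.
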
 

To illustrate the first three parts of Lemma~\ref{lem: Zaslavsky}, 
consider first the line arrangement on the left in Figure~\ref{fig: arrangements}. 
This line arrangement is in general position,
it has seven regions, which is the same as $\binom{3}{\le 2}$ and also the same as the number of intersection subspaces, corresponding to: three points, three lines, and one plane. 
As for the last two parts of Lemma~\ref{lem: Zaslavsky}, consider the line arrangement on the right of Figure~\ref{fig: arrangements}. 
It has six regions but only five intersection subspaces. 
Moreover, since the normal vectors to the lines are in general position, 
part~\ref{part: central} of Lemma~\ref{lem: Zaslavsky} states that the number of regions must be 
bounded by $2 \binom{2}{\le 1} = 6$, which is sharp. 

\begin{figure}[htp]			
  \centering 
  \begin{subfigure}[b]{0.4\textwidth}
    \includegraphics[width=.6\textwidth]{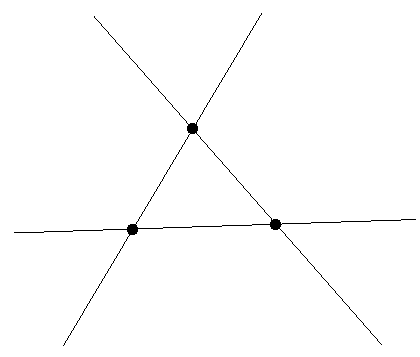} 
  \end{subfigure}
  \qquad
  \begin{subfigure}[b]{0.4\textwidth}
    \includegraphics[width=.6\textwidth]{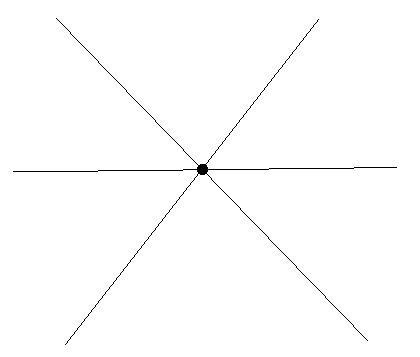} 
  \end{subfigure}
  \caption{Two hyperplane arrangements in $\R^2$}
  \label{fig: arrangements}
\end{figure}

Lemma~\ref{lem: Zaslavsky} can be derived from Zaslavsky's formula \cite{Zaslavsky} and the basic properties of the M\"obius function of geometric lattices \cite[Section~7]{Rota}; see e.g. \cite[Proposition~2.4]{Stanley} for the derivation of \eqref{eq: general arrangements}
and \cite{Ojha} for \eqref{eq: central arrangements}. 
Originally, the upper bound \eqref{eq: general arrangements} goes back to 
R.~C.~Buck \cite{Buck} 
and the lower bound (property \ref{part: Zuev arrangements} in the lemma) was noted by Yu.~Zuev \cite{Zuev}.
Both the upper and lower bounds can also be proven directly -- without Zaslavsky's formula -- using simple inductive arguments, see \cite[Section 6]{Matousek} for the upper bound and \cite{Zuev} for the lower bound. 
The bound \eqref{eq: central arrangements} can also be proven by induction, as in the  
original proof of this result due to L.~Schl\"afli \cite[pp.~209--212]{Schlafli}, 
which is reproduced e.g. in \cite{Wendel} and, more explicitly, in \cite[Theorem~4.1]{Anthony}. See also
\cite{Winder66}.

\section{Tensor lift}		\label{s: linearization}

Our next goal is 
to extend the correspondence between linear threshold functions and the regions associated with hyperplane arrangements to {\em polynomial} threshold functions of arbitrary degree. 
There is a very simple way to achieve this by lifting the problem into a tensor product space. 

\begin{definition}[$d$-th power of vectors and sets]
  The $d$-th power of a vector $x = (x_1,\ldots,x_n) \in \R^n$ 
  is the vector $x^{\le d} \in \R^{\binom{n}{\le d}}$
  whose coordinates are purely homogeneous monomials in $x_1,\ldots,x_n$.
  More precisely, the components of $x^{\le d}$ are indexed by 
  all subsets $I \subset [n]=\{1,2,\ldots,n\}$ that have at most $d$ elements, with:
  $$
  (x^{\le d})_I = \prod_{i \in I} x_i.
  $$
  The product over an empty set is defined to be $1$.  
  The $d$-th power of a set $S \subset \R^n$ is the set 
  $S^{\le d} \subset \R^{\binom{n}{\le d}}$ defined as 
  $$
  S^{\le d} = \left\{ x^{\le d} :\; x \in S \right\}.  
  $$
\end{definition}

For example, for $n=3$ and $d=2$ we have
$$
(x_1,x_2,x_3)^{\le 2} = (1, x_1, x_2, x_3, x_1 x_2, x_1 x_3, x_2 x_3).
$$
The notion of the $d$-th power of a vector is closely related to the notion of {\em tensor power} of a vector. 
Indeed, the $d$-th tensor power of $x \in \R^n$ can be identified with the vector 
$x^{\otimes d} \in \R^{n^d}$ indexed by {\em multisets} $I \subset [n]$ that have 
exactly $d$ elements, possibly with repetitions, and where the coefficients are defined similarly:
$$
(x^{\otimes d})_I = \prod_{i \in I} x_i.
$$
Although not as ubiquitous as tensor power, the concept of tensor lift 
$x^{\le d}$ appears naturally, although often implicitly, in coding theory. 
The list of all vectors $x^{\le d}$ for $x \in \{0,1\}^n$ be seen 
as the $\binom{n}{\le d} \times 2^n$ evaluation matrix, or a {\em truth table},
of all Boolean monomials of degree $d$ \cite{ASW}; the rows of this matrix can be identified 
with Walsh functions that arise in Fourier analysis of Boolean functions \cite{ODonnell book}.

The notion of $d$-th power allows us to establish a canonical correspondence 
between affine polynomials $p(x)$ in $n$ variables of degree at most $d$ with homogeneous monomials 
and homogeneous linear functions of $x^{\le d}$:
$$
p(x) = \sum_{I: |I| \le d} a_I \prod_{i \in I} x_i = \ip{a}{x^{\le d}}, 
\quad \text{where} \quad
x \in \R^n, \; 
a \in \R^{\binom{n}{\le d}}.
$$
This correspondence give the following Lemma.

\begin{lemma}[Linearizing polynomial threshold functions]			\label{lem: linearizing}
  Let $S \subset \{-1,1\}^n$. 
  There are as many different polynomial threshold functions of degree $d$ on $S$ 
  as there are homogeneous linear threshold functions on $S^{\le d}$.
\end{lemma}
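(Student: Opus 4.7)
The plan is to exhibit an explicit bijection between polynomial threshold functions of degree at most $d$ on $S$ and homogeneous linear threshold functions on $S^{\le d}$, using the identity $p(x)=\sum_{I:\,|I|\le d} a_I\prod_{i\in I}x_i = \ip{a}{x^{\le d}}$ already displayed just above the statement of the lemma. The idea is essentially a change of variables: on the Boolean cube, a polynomial of degree at most $d$ is nothing but a linear functional in the lifted variable $x^{\le d}$.

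First I would reduce to the multilinear case. Because $x_i^{2}=1$ holds on $\{-1,1\}^n$, any polynomial $p$ of degree at most $d$ in $x_1,\ldots,x_n$ can be replaced, by iteratively substituting $x_i^2 \mapsto 1$, by a multilinear polynomial $\tilde p$ of degree at most $d$ which agrees with $p$ at every point of $\{-1,1\}^n$, and in particular on $S$. Each such substitution strictly lowers the total degree of the affected monomial, so the rewriting terminates and respects the degree bound. The polynomial threshold function $\sgn(p(x))$ on $S$ therefore depends only on $\tilde p$, and by the displayed identity $\tilde p$ corresponds to a unique coefficient vector $a\in\R^{\binom{n}{\le d}}$.

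Next I would define the map $\Phi$ sending the polynomial threshold function $x\mapsto \sgn(p(x))$ on $S$ to the homogeneous linear threshold function $y\mapsto \sgn(\ip{a}{y})$ on $S^{\le d}$. Well-definedness and injectivity of $\Phi$ both follow from the pointwise identity $\sgn(p(x))=\sgn(\ip{a}{x^{\le d}})$ for $x\in S$: two polynomials induce the same function on $S$ precisely when the corresponding $a$'s induce the same linear threshold function on $S^{\le d}$. For surjectivity, given any homogeneous linear threshold function $g(y)=\sgn(\ip{a}{y})$ well-defined on $S^{\le d}$ (so that $\ip{a}{y}\ne 0$ for all $y\in S^{\le d}$), the polynomial $p(x):=\ip{a}{x^{\le d}}$ has degree at most $d$ and, since $\ip{a}{x^{\le d}}\ne 0$ for $x\in S$, it has no roots on $S$; by construction $\sgn(p(x))$ maps to $g$ under $\Phi$.

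I do not expect a real obstacle here: the lemma is a clean bookkeeping statement and the only step that requires any care is checking that the multilinearization using $x_i^2=1$ neither increases the degree nor changes the values on $S$, which is immediate. Everything else is a direct consequence of the monomial-to-coordinate correspondence that defines $x^{\le d}$.
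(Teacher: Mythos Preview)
Your proposal is correct and follows exactly the approach the paper intends: the paper treats the lemma as an immediate consequence of the displayed identity $p(x)=\ip{a}{x^{\le d}}$ and offers no further proof, so you are simply supplying the routine details (multilinearization via $x_i^2=1$ and the bijection $\Phi$) that the paper leaves implicit. There is nothing to add or correct.
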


This simple fact and the link to hyperplane arrangements described 
in Section~\ref{s: hyperplane arrangements} leads to the following
upper known bound on $T(n,d)$, which is implicit in \cite{Baldi}; see \cite[Theorem~4.7]{Anthony}.

\begin{theorem}[Upper bound]			\label{thm: upper bound sharp}
  For any $1 \le d \le n$, we have:
  $$
  T(n,d) \le 2 \binom{2^n-1}{\le m-1}
  \quad \text{where} \quad
  m = \binom{n}{\le d}.
  $$
\end{theorem}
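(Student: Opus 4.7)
The plan is to chain together the three preceding results (Lemma~\ref{lem: linearizing}, Lemma~\ref{lem: functions vs hyperplanes}, and part~\ref{part: central} of Lemma~\ref{lem: Zaslavsky}) with essentially no extra work; the role of the proof is to bookkeep dimensions and to make sure a central arrangement with $p = 2^n$ hyperplanes actually arises.

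First I would apply Lemma~\ref{lem: linearizing} with $S = \{-1,1\}^n$. By definition, $T(n,d)$ counts polynomial threshold functions of degree $d$ on the Boolean cube, so this lemma identifies $T(n,d)$ with the number of homogeneous linear threshold functions on the lifted set $S^{\le d} \subset \R^m$, where $m = \binom{n}{\le d}$.

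Next, I would observe that the lift map $x \mapsto x^{\le d}$ is injective on $\{-1,1\}^n$: its first $n+1$ coordinates are $(1, x_1, \ldots, x_n)$, from which $x$ is recovered. In particular $|S^{\le d}| = 2^n$ and $0 \notin S^{\le d}$ (the coordinate indexed by $I = \varnothing$ always equals $1$), so every $z \in S^{\le d}$ defines a bona fide hyperplane $z^\perp \subset \R^m$ through the origin.

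Then I would invoke Lemma~\ref{lem: functions vs hyperplanes} applied to $S^{\le d} \subset \R^m \setminus \{0\}$: the number of homogeneous linear threshold functions on $S^{\le d}$ equals the number of regions of the central hyperplane arrangement $\{ z^\perp : z \in S^{\le d} \}$ in $\R^m$. This arrangement consists of $p = 2^n$ hyperplanes, all passing through the origin, so part~\ref{part: central} of Lemma~\ref{lem: Zaslavsky} gives the bound $2\binom{p-1}{\le m-1} = 2\binom{2^n-1}{\le m-1}$ on the number of regions, which is exactly the claim.

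There is no real obstacle here — the argument is a three-line composition of prior lemmas. The only point to be cautious about is ensuring we are entitled to use the \emph{central} bound \eqref{eq: central arrangements} rather than the weaker general bound \eqref{eq: general arrangements}: this is legitimate because every $z^\perp$ contains the origin. The injectivity remark also ensures we do not overcount the hyperplanes, so $p = 2^n$ rather than some smaller number, matching the binomial coefficient in the target inequality.
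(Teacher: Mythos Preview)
Your proposal is correct and follows essentially the same three-step chain as the paper's proof: linearize via Lemma~\ref{lem: linearizing}, pass to regions via Lemma~\ref{lem: functions vs hyperplanes}, and bound regions via part~\ref{part: central} of Lemma~\ref{lem: Zaslavsky}. Your added remarks on injectivity and $0 \notin S^{\le d}$ are sound but not strictly needed for the upper bound (fewer or repeated hyperplanes would only decrease the region count), so they serve as tidy bookkeeping rather than essential steps.
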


\begin{proof}
By Lemma~\ref{lem: linearizing}, the number of Boolean polynomial threshold functions
$T(n,d)$ equals the number of 
homogeneous threshold functions on $\left( \{-1,1\}^n \right)^{\le d}$.
By Lemma~\ref{lem: functions vs hyperplanes}, this is the same as the 
number of regions of the arrangement of hyperplanes 
$(x^{\le d})^\perp$, $x \in \{-1,1\}^n$.
These are $2^n$ central hyperplanes in $\R^m$, where $m = \binom{n}{\le d}$.
Thus part~\ref{part: central} of Lemma~\ref{lem: Zaslavsky} yields:
$$
T(n,d) \le r(2^n,m-1) \le 2 \binom{2^n-1}{\le m-1},
$$
as claimed.
\end{proof}

By simplifying this bound, we can prove the upper bound of Theorem~\ref{thm: main}:

\begin{proof}[Proof of the upper bound in Theorem~\ref{thm: main}]
Due to Theorem~\ref{thm: upper bound sharp}, 
it is enough to check that 
$$
B(n,m) \coloneqq 2 \binom{2^n-1}{\le m-1} \le 2^{nm}
$$
for all integers $n \ge 1$ and $m \ge 2$. 

If $m \ge 4$, then an elementary bound on the binomial coefficients (Lemma~\ref{lem: binomial bounds} in the Appendix) gives
$$
B(n,m) 
\le 2 \Big( \frac{e2^n}{m-1} \Big)^{m-1}
\le 2 \cdot 2^{n(m-1)}
\le 2^{nm}.
$$
Here we used that $m-1 \ge 3 \ge e$.

The two remaining cases to check are $m=2$ and $m=3$. 
If $m=2$, then, setting $N \coloneqq 2^n$, we get
$$
B(n,m) = 2 \binom{N-1}{\le 1} = 2N \le N^2 = 2^{nm},
$$
since $N = 2^n \ge 2$. 
Finally, if $m=3$, then we similarly have
$$
B(n,m) = 2 \binom{N-1}{\le 2} = N^2-N+2 
\le N^2 \le N^3 = 2^{nm}.
$$
Theorem~\ref{thm: main} is proved.
\end{proof}

\section{Random tensors}				\label{s: random tensors}

\subsection{General position}
It remains to prove the lower bound in Theorem~\ref{thm: main}. 
Let us try to reverse the argument that we gave for the upper bound in 
the proof of Theorem~\ref{thm: upper bound sharp}. There we noted that
$T(n,d)$ is the same as the number of regions of the hyperplane arrangement 
$x^\perp$, $x \in \XX$, where
$$
\XX \coloneqq \left( \{-1,1\}^n \right)^{\le d} \subset \R^{\binom{n}{\le d}}.
$$
Suppose for a moment that the points in $\XX$ are in general position (in reality they are not).
Then every subset of $\XX$ consisting of $m < \binom{n}{\le d}$ points
would span a different subspace in $\R^{\binom{n}{\le d}}$. 
The orthogonal complements of these different subspaces are different, too. 
These complements are intersections of some hyperplanes from our arrangement  
$x^\perp$, $x \in \XX$;
we called them {\em intersection subspaces} in Lemma~\ref{lem: Zaslavsky}.
According to part~\ref{part: Zuev arrangements} of this lemma, 
the number of regions of our hyperplane arrangement is bounded 
below by the number of all intersection subspaces, which 
is at least as many as there are $m$-element subsets of $\XX$. This gives:
$$
T(n,d) \ge \binom{|\XX|}{m} = \binom{2^n}{\binom{n}{\le d}-1}
$$
if we choose $m = \binom{n}{\le d}-1$.
This would be an almost matching lower lower bound for the upper bound  
in Theorem~\ref{thm: upper bound sharp}.

\subsection{Linear independence}

The problem with this heuristic argument is that the points in the $d$-th power of the Boolean hypercube $\left( \{-1,1\}^n \right)^{\le d}$, and even in the Boolean hypercube $\{-1,1\}^n$ itself, 
are very far from being in general position. 
For example, the affine hyperplane spanned by a $(n-1)$-dimensional face of the Boolean cube 
contains $2^{n-1}$ points.
Nevertheless, we might be able to say that most subsets of points are in the general position. 
This is where probabilistic reasoning becomes useful, allowing us to interpret ``most'' as {\em random}.

The first result in this direction was recently proved by 
E.~Abbe, A.~Shpilka, and A.~Wigderson \cite[Theorem~4.5]{ASW}
in the context of their study of Reed-Solomon codes. 
Their result states that a random subset $\left( \{-1,1\}^n \right)^{\le d}$
is linearly independent with high probability:

\begin{theorem}[Linear independence \cite{ASW}]		\label{thm: ASW}
  Let $n,d,m$ be positive integers such that 
  $$
  m < \binom{n - \log\binom{n}{\le d} - t}{\le d}.
  $$
  Consider independent random vectors $x_1,\ldots,x_m$
  uniformly distributed in $\{-1,1\}^n$. 
  Then, with probability larger than $1-2^{-t}$, the random vectors 
  $x_1^{\le d}, \ldots, x_m^{\le d} \in \R^{\binom{n}{\le d}}$ are linearly independent.
\end{theorem}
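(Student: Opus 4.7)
The plan is a one-vector-at-a-time revelation argument. Set $s \coloneqq \log_2 \binom{n}{\le d} + t$, so the hypothesis reads $m < \binom{n-s}{\le d}$, and let $L_k$ denote the (random) linear span of $x_1^{\le d}, \ldots, x_k^{\le d}$; at every step one has $\dim L_k \le k \le m-1 < \binom{n-s}{\le d}$. Writing $W(L) \coloneqq \{x \in \{-1,1\}^n : x^{\le d} \in L\}$, independence of the $x_i$ gives $\Pr{x_{k+1}^{\le d} \in L_k \mid L_k} = |W(L_k)|/2^n$, and a union bound over $k$ yields
$$
\Pr{x_1^{\le d}, \ldots, x_m^{\le d}\ \text{are linearly dependent}} \;\le\; \sum_{k=0}^{m-1} \frac{|W(L_k)|}{2^n}.
$$
First I would reduce to the deterministic bound: if $|W(L)| \le 2^{n-s}$ whenever $\dim L < \binom{n-s}{\le d}$, then each summand is at most $2^{-s}$, so the total is at most $m \cdot 2^{-s} < \binom{n}{\le d} \cdot 2^{-s} = 2^{-t}$, as desired.

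The entire argument thus reduces to the following deterministic lemma, which I regard as the main obstacle: \emph{every $S \subset \{-1,1\}^n$ with $|S| > 2^{n-s}$ satisfies $\dim \Span\{x^{\le d} : x \in S\} \ge \binom{n-s}{\le d}$.} Equivalently, sufficiently large subsets of the Boolean cube span, under the tensor lift, a subspace of near-maximal dimension; this is a ``random restriction'' property of the Reed-Muller generator matrix.

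To prove the lemma my plan is to induct on the pair $(n,d)$ via a single-coordinate decomposition. Split $x \in \{-1,1\}^n$ as $(y, x_n)$ with $y \in \{-1,1\}^{n-1}$; then $x^{\le d}$ breaks into monomials avoiding $x_n$ (contributing $y^{\le d} \in \R^{\binom{n-1}{\le d}}$) and monomials containing $x_n$ (contributing $x_n \cdot y^{\le d-1} \in \R^{\binom{n-1}{\le d-1}}$). Set $Y_\pm = \{y : (y, \pm 1) \in S\}$, $A = Y_+ \cup Y_-$, $B = Y_+ \cap Y_-$, so that $|A|+|B|=|S|$ and $|A| \ge |S|/2$. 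Projecting $V \coloneqq \Span\{x^{\le d} : x \in S\}$ onto the first block has image $\Span\{y^{\le d} : y \in A\}$, and for each $y \in B$ the difference $(y,+1)^{\le d}-(y,-1)^{\le d}$ produces $(0, 2y^{\le d-1}) \in V$ inside the kernel of that projection, so
$$
\dim V \;\ge\; \dim \Span\{y^{\le d} : y \in A\} \;+\; \dim \Span\{y^{\le d-1} : y \in B\}.
$$
The inductive hypothesis applied to each term, combined with the Pascal-type identity $\binom{n-s}{\le d} = \binom{n-1-s}{\le d} + \binom{n-1-s}{\le d-1}$, is what closes the induction. The delicate point — and where essentially all the technical work sits — is a careful case split on how the mass $|S|$ distributes between $|A|$ and $|B|$: when both exceed $2^{n-1-s}$ Pascal closes the induction at once, but when $|B|$ is much smaller one must trade dimension from $A$ (which is correspondingly larger) against the missing contribution from $B$. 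Making this trade-off precise amounts to a small convexity estimate on sums of binomial coefficients, and is where I expect the main effort of the proof to concentrate.
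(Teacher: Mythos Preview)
The paper does not prove this theorem: it is quoted from \cite{ASW} (their Theorem~4.5, resting on their Lemma~4.10) and used as a black box. Your opening reduction---revealing the $x_k$ one at a time and reducing everything to the deterministic lemma that any $S\subset\{-1,1\}^n$ with $|S|>2^{n-s}$ satisfies $\dim\Span\{x^{\le d}:x\in S\}\ge\binom{n-s}{\le d}$---is correct, and is how \cite{ASW} proceeds as well.

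The gap is in your inductive proof of that deterministic lemma. The hypothesis you carry is quantized at powers of two: it gives $\dim\Span\{y^{\le d}:y\in A\}\ge\binom{j}{\le d}$ only for the largest integer $j$ with $|A|>2^j$. When the coordinate split is unbalanced this is too weak, and no ``convexity estimate on binomial sums'' can repair it, because the deficit is in the hypothesis, not in the arithmetic. Take $n=4$, $n-s=2$, $d=2$, and let $S$ be the all-$(+1)$ point together with its four Hamming neighbours; then $|S|=5>4$ and the target is $\dim\ge\binom{2}{\le 2}=4$. By symmetry, splitting on \emph{any} coordinate gives $|A|=4$ and $|B|=1$; your IH then yields only $\dim_A^{(2)}\ge\binom{1}{\le 2}=2$ (since $|A|=4\not>4$) and $\dim_B^{(1)}=1$, totalling $3<4$. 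Saying ``$A$ is correspondingly larger'' buys nothing here: moving $|A|$ from $2^{j}+1$ all the way to $2^{j+1}$ does not improve what the IH supplies, so there is no slack to trade against a small $B$. The argument in \cite{ASW} for the deterministic lemma does not use this coordinate induction; over $\F_2$ it goes through a monomial-order argument (the standard monomials of $S$ form a down-closed family of size $|S|$ in $2^{[n]}$, and any down-set of size at least $2^k$ must contain at least $\binom{k}{\le d}$ sets of cardinality at most $d$). If you want to rescue a coordinate induction, you would need to carry that sharper, non-quantized lower bound on $\dim$ as your inductive statement rather than the power-of-two thresholds.
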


The special case of Theorem~\ref{thm: ASW} obtained with $d=1$ states 
that a $n \times m$ random matrix with columns $x_k$ has full rank with high probability, 
if $m < n - C \log n$. 
This statement is not difficult to prove, and much more precise results are known 
about the singularity of random matrices 
\cite{KKS, Tao-Vu RSA, Costello-Tao-Vu, Tao-Vu JAMS, Rudelson Annals, Tao-Vu Annals, RV square, RV upper, Adamczak-et-al, RV rectangular, Tao-Vu GAFA, Bourgain-Wood-Vu, RV ICM, Nguyen EJP, Nguyen SIAM, V symmetric, RV JAMS, GNT, Basak-Rudelson, Tikhomirov Advances, Tikhomirov Israel, Litvak-et-al, Cook, Tikhomirov IMRN}. 

The original version of Theorem~\ref{thm: ASW} from \cite{ASW} guaranteed 
linear independence over the finite field $\F_2$, which is stronger
than the linear independence over $\R$.\footnote{If some Boolean vectors are linearly dependent over $\R$, then one can find a non-trivial linear combination that equals zero, and whose all coefficients are rational, and thus even integer. (This can be seen e.g. by performing the Gauss elimination.) 
Moreover, without loss of generality, not all of the coefficients are even: otherwise we can divide both sides by $2$. Taking $\mod 2$ of both sides of this equation, we obtain a linear dependence over $\F^2$.}
Moreover, in the original version of this theorem, the coordinates of the vectors 
$x_1,\ldots,x_m$ were uniformly distributed in $\{0,1\}$ rather than in $\{-1,1\}$,
but the proof can be easily adapted to random $\{-1,1\}$ valued random variables.\footnote{Most of the argument of \cite{ASW} 
	extends to $\{-1,1\}$ without any change. The only place that needs attention 
	is Lemma~4.10 \cite{ASW}, which states that there exists a lot of linearly independent 
	polynomials on any large subset of $\F_2^m$. Obviously, there is the same number of 
	polynomials on any affine transformation of $\F_2^m$, in particular on $\{-1,1\}^m$.}

\subsection{Resilience}

However, linear independence of random vectors $x_1^{\le d}, \ldots, x_m^{\le d}$ 
is still too weak for our purposes. 
What we really need is
to be able to show that the span of random vectors 
$x_1^{\le d}, \ldots, x_m^{\le d}$ does not contain any vector of the form $u^{\le d}$, $u \in \{-1,1\}^n$, that is different from all the $\pm x_i^d$. 
This is a {\em resilience} property of linear independence, as it states that not only 
the set of vectors are independent, but independence holds even if we add any vector 
$u^{\le d}$, $u \in \{-1,1\}^n$, to the set. 
The following new result establishes the resilience of linear independence for random tensors:

\begin{theorem}[Resilience of linear independence]	\label{thm: resilience}
  Let $n,d,m$ be positive integers such that 
  $d n^{0.02} \le t \le 0.001n$ 
  and 
  $$
  m < \binom{n - \frac{Cn}{\log n} - t}{\le d}.
  $$
  Consider independent random vectors $x_1,\ldots,x_m$
  uniformly distributed in $\{-1,1\}^n$. 
  Then, with probability larger than $1-2^{-t/4}$, the  span of the random vectors 
  $x_1^{\le d}, \ldots, x_m^{\le d}$ does not contain any other vector of the form $u^{\le d}$,
  $u \in \{-1,1\}^n$, that is different from $\pm x_i^{\le d}$. 
\end{theorem}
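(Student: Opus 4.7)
The plan is to reduce the statement to a per-$u$ probability estimate via a union bound and a symmetry argument, and then establish this estimate by combining the Abbe--Shpilka--Wigderson (ASW) theorem (Theorem~\ref{thm: ASW}) with the Erd\H{o}s Littlewood--Offord lemma applied to ``fresh'' coordinates. For any fixed $u \in \{-1,1\}^n$, the coordinate-wise flip $x \mapsto u \odot x$ is a uniform-preserving bijection of $\{-1,1\}^n$, and the identity $(u \odot x)^{\le d} = u^{\le d} \odot x^{\le d}$ (Hadamard product in $\R^N$ with $N := \binom{n}{\le d}$) together with $u^{\le d} \odot u^{\le d} = \mathbf{1}_N$ shows that $\{u^{\le d} \in \Span(x_i^{\le d})\}$ is identically distributed with $\{\mathbf{1}_N \in \Span(y_i^{\le d})\}$, where $y_i := u \odot x_i$ are iid uniform in $\{-1,1\}^n$ and the exclusion $u \ne x_i$ becomes $y_i \ne \mathbf{1}_n$. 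Thus it suffices to prove the per-$u$ estimate
\[
\mathbb{P}\!\left\{ \mathbf{1}_N \in \Span(y_1^{\le d}, \ldots, y_m^{\le d}) \text{ and } y_i \ne \mathbf{1}_n \text{ for all } i \right\} \;\le\; 2^{-n - t/2},
\]
so that a union bound over the $2^n$ choices of $u$ yields failure probability at most $2^{-t/2} \le 2^{-t/4}$.

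To prove the per-$u$ bound, I partition $[n] = S \sqcup S^c$ with $|S| = s \approx n - Cn/\log n - t$ (chosen slightly larger than the lower limit so that the $\log \binom{s}{\le d}$ correction of ASW is absorbed by the $Cn/\log n$ slack), so that $|S^c| \gtrsim Cn/\log n + t$. The restrictions $y_i|_S$ are iid uniform in $\{-1,1\}^s$, so Theorem~\ref{thm: ASW} applied with $n \leftarrow s$ shows that the restricted tensors $y_i^{\le d}|_{\{I \subset S\}} \in \R^{\binom{s}{\le d}}$ are linearly independent with probability at least $1 - 2^{-t}$. On this event, select an $m$-subset $B \subset \{I \subset S : |I| \le d\}$ for which the submatrix $Y_B := [y_i^I]_{i \in [m], I \in B}$ is invertible; any $c \in \R^m$ realizing $\mathbf{1}_N = \sum_i c_i y_i^{\le d}$ must then satisfy $c = Y_B^{-1} \mathbf{1}_B$, so $c$ depends only on the ``core'' randomness $y_i|_S$. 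The remaining first-order equations $\sum_i c_i y_{i,j} = 1$ for $j \in S^c$ involve only ``fresh'' iid $\pm 1$ variables $(y_{i,j})_{i \in [m]}$, independent of $c$ and mutually independent across distinct $j \in S^c$. Erd\H{o}s Littlewood--Offord gives, for each such $j$,
\[
\mathbb{P}\!\left\{ \textstyle\sum_i c_i y_{i,j} = 1 \;\Big|\; c \right\} \;\le\; \binom{k}{\lfloor k/2\rfloor}\big/2^k \;\le\; C/\sqrt{k}, \qquad k := |\supp(c)|,
\]
and multiplication across independent $j \in S^c$ yields $(C/\sqrt{k})^{n-s}$. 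In the generic regime $k \asymp m$, the choice of parameters gives a bound below $2^{-n-t/2}$.

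The main obstacle is controlling the small-support case: for small $k$, the Littlewood--Offord factor $1/\sqrt{k}$ degrades. The exclusion $y_i \ne \mathbf{1}_n$ rules out $k = 1$ directly (reading off $I = \emptyset$ forces $c_i = 1$, hence $y_i = \mathbf{1}_n$), and an elementary case analysis shows that $k = 2$ requires both participating $y_i$'s to equal $\mathbf{1}_n$, again excluded. For $3 \le k \ll m$, I fix a candidate support $K \subset [m]$ of size $k$ and analyze $\mathbf{1}_N \in \Span(y_i^{\le d} : i \in K)$ by exploiting \emph{all} $n$ first-order coordinates: subtracting the $I = \emptyset$ equation from each $I = \{j\}$ equation yields the subset-sum constraint $\sum_{i \in K_j^-} c_i = 0$ with $K_j^- := \{i \in K : y_{i,j} = -1\}$, and Erd\H{o}s--Littlewood--Offord on subset sums bounds the number of admissible $K_j^-$'s by $\binom{k}{\lfloor k/2 \rfloor}$, contributing a factor $\le C/\sqrt{k}$ per $j$ independently across $j \in [n]$. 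Taking the product over all $n$ coordinates, summing over $\binom{m}{k}$ choices of $K$ and over $k \ge 3$, and invoking the hypothesis $t \ge d n^{0.02}$ to dominate $\binom{m}{k} \le m^k \le n^{dk}$ against the decay $(C/\sqrt{k})^n$, yields the small-support contribution as negligible compared to $2^{-n - t/2}$. Combining the large- and small-support regimes completes the per-$u$ bound, and the union bound over $u$ proves the theorem.
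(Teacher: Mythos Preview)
Your overall strategy---partition the coordinates, use Theorem~\ref{thm: ASW} on one block to pin down the coefficient vector $c$, apply Littlewood--Offord on the remaining fresh coordinates, and split cases according to $|\supp(c)|$---matches the paper's proof, and the long-combination part is essentially correct. The Hadamard-flip symmetry reducing to $u=\mathbf{1}$ is a pleasant simplification the paper does not make.

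The short-combination case, however, has a real gap. You claim that for a fixed support $K$ of size $k$, the Erd\H{o}s bound gives a factor $C/\sqrt{k}$ ``per $j$ independently across $j\in[n]$,'' and then multiply to obtain $(C/\sqrt{k})^n$. That factor is valid only once $c$ is \emph{fixed}: the admissible family $\{J\subset K:\sum_{i\in J}c_i=0\}$ depends on $c$, whereas in your event $c$ is existentially quantified and may adapt to the entire realization $(y_{i,j})_{i\in K,\,j\in[n]}$; different columns can be certified by different $c$'s, so the per-$j$ bounds cannot simply be multiplied. The paper closes this by running the \emph{same} two-batch scheme for short combinations, but with a much smaller first batch $n_0=0.1n$: since only $k\le n^{0.01}$ vectors are in play, Theorem~\ref{thm: ASW} on $[n_0]$ already determines $c$, and the remaining $0.9n$ coordinates are genuinely fresh; Lemma~\ref{lem: Littlewood-Offord nonzero RHS} (using that the right-hand side $u_j=\pm1$ is nonzero) then gives the constant bound $P(k+1)\le 3/8$ per fresh coordinate, so $2^n(3/8)^{0.9n}\le 2^{-t}$, and the hypothesis $t\ge dn^{0.02}$ absorbs the union bound over support patterns. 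A related bookkeeping slip: your per-$u$ target $2^{-n-t/2}$ cannot accommodate the ASW failure probability $2^{-t}$, since $t\le 0.001n$; but the ASW event is in fact the same for every $u$ (Hadamard multiplication by $u^{\le d}$ is a diagonal isomorphism preserving linear independence), so it should be pulled out \emph{before} the union bound over $u$, as the paper does.
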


This result is not an easy consequence of Theorem~\ref{thm: ASW}, 
and it requires additional probabilistic tools. 
The partial case of Theorem~\ref{thm: resilience} where $d=1$
was proved by Odlyzko \cite{Odlyzko} with a very sharp bound on the probability, 
which we do not need here.
It was later noticed that for $d=1$, the probabilities for linear independence and resilience 
are asymptotically equivalent \cite{Voigt-Ziegler}.
Odlyzko's result was used in Zuev's argument \cite{Zuev} to prove \eqref{eq: Zuev}.
For tensors of any order $d > 1$, Theorem~\ref{thm: resilience} is new. We derive it
from Theorem~\ref{thm: ASW} using an argument that is inspired by Odlyzko's method.

\subsection{Proof of the lower bound in Theorem~\ref{thm: main}.}	\label{s: lower bound}

Let us assume for a moment that the Resilience Theorem~\ref{thm: resilience} is valid,
and show how it yields the lower bound in our main result, Theorem~\ref{thm: main}.

\begin{lemma}[Lots of unique subspaces]			\label{lem: lots of uniques}
  Let $n,d,m$ be positive integers such that 
  $1 \le d \le n^{0.9}$ 
  and 
  $$
  m \le \binom{n - \frac{Cn}{\log n}}{\le d}.
  $$
  Then there exist at least $\frac{1}{2}\binom{2^n}{m}$ 
  different subspaces of the form $\Span(S^{\le d})$ 
  where $S$ are subsets of $\{-1,1\}^n$ of cardinality $m$.
\end{lemma}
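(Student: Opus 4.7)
The plan is to lower-bound the number of distinct subspaces of the form $\Span(S^{\le d})$ by the number of ``good'' subsets $S$, where a subset is called good if the conclusion of the Resilience Theorem~\ref{thm: resilience} holds for it. The Resilience Theorem will provide a large number of good subsets, and goodness will force the map $S \mapsto \Span(S^{\le d})$ to be injective on them.

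Specifically, call $S = \{x_1, \ldots, x_m\} \subset \{-1,1\}^n$ good if $V_S \coloneqq \Span(S^{\le d})$ contains no vector $u^{\le d}$ with $u \in \{-1,1\}^n$ other than those arising from $u \in S$. Note that $u^{\le d} = -x_i^{\le d}$ is impossible for $u \in \{-1,1\}^n$, since the empty-subset coordinate would force $1 = -1$; hence the ``$\pm$'' in Theorem~\ref{thm: resilience} collapses to ``$u = x_i$''. Now if $S$ is good and $S'$ is any $m$-subset with $V_{S'} = V_S$, then every $u \in S'$ gives $u^{\le d} \in V_S$ and therefore lies in $S$, so $S' \subseteq S$; equality of cardinalities forces $S' = S$. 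Hence the map $S \mapsto V_S$ is injective on the set $\mathcal{G}$ of good subsets, and the number of distinct subspaces is at least $|\mathcal{G}|$.

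To count $\mathcal{G}$, apply Theorem~\ref{thm: resilience} with $t \coloneqq \max(d n^{0.02}, 8)$; this satisfies $t \le 0.001 n$ for $n$ large. The theorem's hypothesis $m < \binom{n - C_1 n/\log n - t}{\le d}$ must be implied by the lemma's $m \le \binom{n - C n/\log n}{\le d}$, which holds because $t \le n^{0.92} = o(n/\log n)$ throughout $d \le n^{0.9}$, so the extra $-t$ can be absorbed by enlarging $C$ above $C_1$ by a universal amount. Theorem~\ref{thm: resilience} then says independent uniform $x_1, \ldots, x_m \in \{-1,1\}^n$ satisfy the resilience condition with probability at least $1 - 2^{-t/4}$. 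Passing from ordered tuples to $m$-subsets: the entries are pairwise distinct except with probability at most $\binom{m}{2}/2^n$, which is $o(1)$ since $m \le \binom{n}{\le n^{0.9}} \ll 2^{n/2}$, and conditioning on distinctness yields the uniform distribution on $m$-subsets. Therefore $|\mathcal{G}|/\binom{2^n}{m} \ge 1 - 2^{-t/4} - \binom{m}{2}/2^n \ge 1/2$ for $n$ large enough.

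The main subtlety is matching parameters between the two statements: Theorem~\ref{thm: resilience} demands a slack $t \ge d n^{0.02}$ beyond the $C_1 n/\log n$ window, while the lemma provides only the $Cn/\log n$ window itself. This is essentially free because $d n^{0.02} = o(n/\log n)$ on the allowed range $d \le n^{0.9}$, so the extra slack costs only an enlargement of the absolute constant. Everything else---the fiber-uniqueness deduction and the tuple-to-subset probability conversion---is elementary, given the Resilience Theorem as a black box.
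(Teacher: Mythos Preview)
Your proof is correct and follows essentially the same approach as the paper: define good subsets via the resilience conclusion, observe that $S \mapsto \Span(S^{\le d})$ is injective on good subsets, then invoke Theorem~\ref{thm: resilience} and convert from i.i.d.\ tuples to uniform $m$-subsets to show that at least half of all $m$-subsets are good. The only differences are cosmetic---the paper takes $t = Cn/\log n$ and bounds the conditional bad-probability via $\Pr{\text{bad}}/\Pr{\text{no repeat}} \le (1/4)/(1/2)$ rather than your $1 - 2^{-t/4} - \binom{m}{2}/2^n$---and your remark that the empty-set coordinate forces $u^{\le d} \ne -x_i^{\le d}$, collapsing the ``$\pm$'' in the resilience statement, is a clarification the paper leaves implicit.
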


\begin{proof}
Given a subset $S \subset \{-1,1\}^n$, let us call it {\em good} 
if $\Span(S^{\le d})$ does not contain any vector of the form $u^{\le d}$,
$u \in \{-1,1\}^n$, that is different from all $\pm x^{\le d}$, $x \in S$. 
Call $S$ {\em bad} otherwise. 
Obviously, if $S$ and $T$ are two different good subsets, 
then $\Span(S^{\le d})$ and $\Span(T^{\le d})$ are two different  subspaces. 
Thus, to complete the proof, it suffices to show that at least half of all $m$-element subsets $S$ of the Boolean hypercube $\{-1,1\}^n$ are good.

Let us apply Theorem~\ref{thm: resilience} with the 
value $t \coloneqq Cn/\log n$, which obviously satisfies the assumptions for sufficiently
large $n$. The theorem implies that 
if $S$ is a random set obtained by sampling $m$ points from $\{-1,1\}^n$ {\em with replacement}, then: 
\begin{equation}	\label{eq: bad}
\Pr{S \text{ is bad}} \le \frac{1}{4}.
\end{equation}
The probability that there are no repetitions among these $m$ points is:
\begin{equation}	\label{eq: no repeat}
\Pr{ \text{no repeat} } 
  = \prod_{i=1}^{m-1} \Big( 1 - \frac{i}{2^n} \Big)
  \ge \Big( 1 - \frac{m-1}{2^n} \Big)^{m-1}
  \ge 1 - \frac{(m-1)^2}{2^n} \ge \frac{1}{2}.
\end{equation}
To check the last bound, recall that $d \le n^{0.9}$, so for sufficiently large
$n$ we get:
$$
\log m \le \log \binom{n}{\le d} \le d \log (en) \le \frac{n-1}{2},
$$
which implies $(m-1)^2 \le 2^{n-1}$ and yields the bound in \eqref{eq: no repeat}.

Thus, the probability that a random $m$-element subset $S$, 
sampled {\em without replacement} from $\{-1,1\}^n$, is bad, is be bounded by:
$$
\frac{\Pr{S \text{ is bad}}}{\Pr{ \text{no repeat} }} \le \frac{1/4}{1/2} = \frac{1}{2}.
$$
Hence, at most a half of the $m$-elements subsets of $\{-1,1\}^n$ are bad. 
The proof of the lemma is complete.
\end{proof}

\begin{proof}[Proof of the lower bound in Theorem~\ref{thm: main}.]
According to the Linearization Lemma~\ref{lem: linearizing}, 
$T(n,d)$ is the number of linear threshold functions on the set
$$
\XX := \left( \{-1,1\}^n \right)^{\le d}
\subset \R^{\binom{n}{\le d}}.
$$
This number, according to Lemmas~\ref{lem: functions vs hyperplanes} and \ref{lem: Zaslavsky},
is bounded below by the number of all intersection subspaces, which are the linear subspaces generated by intersecting various hyperplanes $z^\perp$, $z \in \XX$.
The orthogonal complement of each intersection subspace is the linear span of a subset of $\XX$. Thus, the number of intersection subspaces equals the number of subspaces obtained as spans of subsets of $\XX$. The number of spans can be bounded below using 
Lemma~\ref{lem: lots of uniques}. This line of reasoning yields:
$$
T(n,d) \ge \frac{1}{2}\binom{2^n}{m}
\quad \text{where} \quad
m = \binom{n - \frac{Cn}{\log n}}{\le d}.
$$
It remains to simplify this bound. Taking logarithms of both sides and using 
a simple bound on binomial coefficients (Lemma~\ref{lem: binomial bounds} in the Appendix),
we get:
$$
\log_2 T(n,d) \ge m(n-\log m)-1
\ge m(n-2\log m).
$$ 
Another elementary bound on binomial sums (Lemma~\ref{lem: more binomial bounds} in the Appendix) gives
$$
m \ge \Big( 1 - \frac{2C}{\log n} \Big)^d \binom{n}{\le d},
$$
and using Lemma~\ref{lem: binomial bounds} again we see that
$$
\log m < \log \binom{n}{\le d} \le d \log (en) \le \frac{Cn}{\log n}.
$$
Thus
$$
\log_2 T(n,d) 
> \Big( 1 - \frac{2C}{\log n} \Big)^{d+1} n \binom{n}{\le d}
\ge \Big( 1 - \frac{4C}{\log n} \Big)^d n \binom{n}{\le d}.
$$
This competes the proof of the main theorem.
\end{proof}

\section{The Littlewood-Offord lemma}			\label{s: Littlewood-Offord}

In this section and the next one, we prove the Resilience Theorem~\ref{thm: resilience}.
Our argument is inspired by the proof of the partial case of this result for $d=1$
due to Odlyzko \cite{Odlyzko}. The proof is based on the classical Littlewood-Offord lemma
about {\em anti-concentration} of sums of independent random variables. 

Let $\xi_1,\ldots,\xi_n$ be independent random variables and $a_1,\ldots,a_n \in \R$ be fixed coefficients. 
A classical question, which goes back to J.~E.~Littlewood and A.~C.~Offord \cite{Littlewood-Offord} is 
to determine the probability that the sum of independent random variables $\sum a_k \xi_k$ hits a given number $u \in \R$. The first general result on this problem, now commonly known as the Littlewood-Offord Lemma, was proved by J.~E.~Littlewood and A.~C.~Offord \cite{Littlewood-Offord} and sharpened by P.~Erd\"os \cite{Erdos}. 

\begin{lemma}[Littlewood-Offord Lemma \cite{Erdos}] 		\label{lem: Littlewood-Offord}
  Let $\xi_1, \ldots, \xi_n$ be independent, zero-mean, random variables 
  taking values in $\{-1, 1\}$,
  and let $a_1,\ldots,a_n$ be nonzero real numbers.
  Then, for every fixed $u \in \R$, we have\footnote{Here $\lfloor m \rfloor$ denotes the 
    floor of $m$, i.e. the largest integer that is less or equal to $m$.}
  $$
  \Pr{\sum_{k=1}^n a_k \xi_k =u} \le 2^{-n} \binom{n}{\lfloor n/2 \rfloor} =: P(n).
  $$
\end{lemma}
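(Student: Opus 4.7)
The plan is to follow the classical combinatorial proof of Erd\H{o}s, which reduces the anti-concentration bound to Sperner's theorem on antichains in the Boolean lattice.

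First I would reduce to the case where all coefficients are positive. If some $a_k < 0$, replace the pair $(a_k, \xi_k)$ with $(-a_k, -\xi_k)$; since $\xi_k$ is symmetric, $-\xi_k$ has the same distribution as $\xi_k$, and the sum $\sum a_k \xi_k$ is unchanged in distribution. So WLOG $a_k > 0$ for all $k$. Next, I would parameterize the $2^n$ equally likely outcomes by subsets $A \subset [n]$, setting $A = \{k : \xi_k = +1\}$. Under this identification,
$$
S_A \coloneqq \sum_{k=1}^n a_k \xi_k = \sum_{k \in A} a_k - \sum_{k \notin A} a_k,
$$
and each subset $A$ has probability exactly $2^{-n}$.

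The key observation is that the map $A \mapsto S_A$ is strictly monotone with respect to set inclusion: if $A \subsetneq B$, then $S_B - S_A = 2 \sum_{k \in B \setminus A} a_k > 0$ because all $a_k$ are positive and $B \setminus A$ is nonempty. Consequently, the collection
$$
\AA_u \coloneqq \{ A \subset [n] : S_A = u \}
$$
contains no two comparable sets, i.e., $\AA_u$ is an antichain in the Boolean lattice on $[n]$.

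Now I would invoke Sperner's theorem: any antichain in the Boolean lattice on $n$ elements has cardinality at most $\binom{n}{\lfloor n/2 \rfloor}$. Therefore
$$
\Pr{\sum_{k=1}^n a_k \xi_k = u} = 2^{-n} |\AA_u| \le 2^{-n} \binom{n}{\lfloor n/2 \rfloor},
$$
which is the claimed bound. There is no real obstacle here; the only delicate point is the reduction to positive $a_k$, which must use the symmetry of the $\xi_k$'s, and the recognition that the level sets of $A \mapsto S_A$ are antichains, after which Sperner's theorem does the rest.
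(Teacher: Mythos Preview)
Your proof is correct and is precisely the classical Erd\H{o}s argument via Sperner's theorem. The paper does not spell out its own proof of this lemma; it simply cites \cite{Erdos} and remarks that the result ``quickly follows from Sperner's theorem,'' which is exactly the route you take.
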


A slightly more general version of Lemma~\ref{lem: Littlewood-Offord}, which bounds the probability that 
the sum falls in a given neighborhood of $u$, quickly follows from Sperner's theorem in combinatorics \cite{Erdos}, see \cite[Chapter~4]{Bollobas}.

Note that the probability bound in the Littlewood-Offord lemma is sharp: it reduces to an equality 
if all coefficients $a_k$ are the same and $u = 0$.
For many other vectors of coefficient $a = (a_1,\ldots,a_n)$, one can obtain better bounds depending on the arithmetic structure of $a$. Such bounds have been extensively studied in connection to number theory, combinatorics and, more recently, random matrix theory; see, for instance,  \cite{Erdos, Sarkozy-Szemeredi, Halasz, Frankl-Furedi, Tao-Vu Annals, RV square, RV rectangular, Tao-Vu sharp, Nguyen-Vu, V symmetric, Nguyen Duke, Costello, MNV, RV no-gaps}, and the surveys \cite{Tao-Vu Bulletin, RV ICM}.

Using Stirling's approximation to estimate the binomial coefficient, we can derive the following, less precise but simpler, bound on the probability in the Littlewood-Offord Lemma.

\begin{lemma}[Probability bounds in Littlewood-Offord Lemma]		\label{lem: bounds on Pn}
  We have: 
  $$
  P(n) \le \frac{C}{\sqrt{n}} \text{ for all } n \ge 1; 
  \qquad 
  P(n) \le \frac{3}{8} \text{ for all } n \ge 3.
  $$
\end{lemma}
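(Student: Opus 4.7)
The plan is to handle the two bounds separately, with both reducing to a quantitative estimate on the central binomial coefficient $\binom{n}{\lfloor n/2 \rfloor}$ that appears in the definition of $P(n)$.

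For the first bound, I would invoke Stirling's approximation to obtain a constant $c$ such that $\binom{n}{\lfloor n/2 \rfloor} \le c \cdot 2^n/\sqrt{n}$ for all $n \ge 1$; dividing by $2^n$ immediately gives $P(n) \le C/\sqrt{n}$. (One could also cite the standard sharp form $\binom{2k}{k} \le 4^k/\sqrt{\pi k}$ and separate the odd case via Pascal.) Any finitely many small-$n$ exceptions can be absorbed into the constant $C$.

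For the sharper numerical bound $P(n) \le 3/8$ on $n \ge 3$, I would prove that $P(n)$ is non-increasing and then verify $P(3) = 3/8$ by hand. Monotonicity follows by computing the ratio $P(n+1)/P(n) = \binom{n+1}{\lfloor (n+1)/2\rfloor}/\bigl(2\binom{n}{\lfloor n/2\rfloor}\bigr)$ and splitting on the parity of $n$: when $n = 2k+1$, Pascal's identity together with the symmetry $\binom{2k+1}{k+1} = \binom{2k+1}{k}$ yields $\binom{2k+2}{k+1} = 2\binom{2k+1}{k}$, so the ratio equals $1$; when $n = 2k$, Pascal's identity gives $\binom{2k+1}{k} = \binom{2k}{k} + \binom{2k}{k-1} \le 2\binom{2k}{k}$, so the ratio is at most $1$. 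Combined with the direct computation $P(3) = 2^{-3}\binom{3}{1} = 3/8$, this yields $P(n) \le 3/8$ for all $n \ge 3$.

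There is no genuine obstacle here; the lemma is a routine Stirling-plus-monotonicity calculation, recorded only for convenient later use in the resilience argument of Section~\ref{s: resilience}.
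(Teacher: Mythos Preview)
Your proposal is correct and follows exactly the paper's approach: Stirling for the $C/\sqrt{n}$ bound, and monotonicity of $P(n)$ together with the explicit value $P(3)=3/8$ for the second bound. The paper's proof merely asserts that $P(n)$ is non-increasing without spelling out the parity case split, so your version is in fact a more detailed rendering of the same argument.
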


\begin{proof}
The first bound follows from Stirling's formula. Furthermore, one can easily check 
that the numbers $P(n)$ form a non-increasing sequence and $P(3) = 3/8$. 
This gives the second bound.
\end{proof}

The bound in the Littlewood-Offord Lemma~\ref{lem: Littlewood-Offord} can be slightly strengthened
if $u \ne 0$. Although the following may seem like a small improvement, it can be critical for small 
values of $n$.

\begin{lemma}		\label{lem: Littlewood-Offord nonzero RHS}
  If $u \ne 0$ in the Littlewood-Offord Lemma~\ref{lem: Littlewood-Offord}, then 
  $$
  \Pr{\sum_{k=1}^n a_k \xi_k =u} \le P(n+1).
  $$
\end{lemma}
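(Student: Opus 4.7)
The plan is to reduce the bound to the standard Littlewood-Offord Lemma~\ref{lem: Littlewood-Offord} applied to an augmented sum of $n+1$ independent signs, using $u$ itself as the extra coefficient. The point is that the nonzero hypothesis on $u$ is exactly what is needed so that augmenting with a new nonzero coefficient is legal.

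First I would introduce a fresh $\pm 1$ random variable $\xi_{n+1}$, independent of $\xi_1,\ldots,\xi_n$, and set $a_{n+1} := u$, which is nonzero by assumption. Writing $S_n := \sum_{k=1}^n a_k \xi_k$, I would condition on the value of $\xi_{n+1}$ to get
$$
\Pr{S_n + a_{n+1}\xi_{n+1} = 0}
= \tfrac{1}{2}\Pr{S_n = -u} + \tfrac{1}{2}\Pr{S_n = u}.
$$
Next, I would use the symmetry of the $\xi_k$'s: since $(-\xi_1,\ldots,-\xi_n)$ has the same joint distribution as $(\xi_1,\ldots,\xi_n)$, we have $\Pr{S_n = -u} = \Pr{S_n = u}$. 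Substituting this into the display collapses the right-hand side to $\Pr{S_n = u}$, giving the identity
$$
\Pr{S_n = u} = \Pr{\,\sum_{k=1}^{n+1} a_k \xi_k = 0\,}.
$$

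Finally, I would apply Lemma~\ref{lem: Littlewood-Offord} to the augmented sum of $n+1$ independent $\pm 1$ variables with nonzero coefficients $a_1,\ldots,a_n, u$, and evaluation point $0$. This directly yields the bound $P(n+1)$ on the right-hand side, and the lemma follows. There is no real obstacle: the argument is a one-line reduction whose only subtlety is noticing that the hypothesis $u \neq 0$ is precisely what makes the $(n+1)$-st coefficient admissible in the classical Littlewood-Offord bound, and that symmetrization converts a $\{-u,u\}$ hit into a single $\{0\}$ hit after averaging over $\xi_{n+1}$.
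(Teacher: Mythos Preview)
Your proposal is correct and is essentially the same argument as the paper's: both introduce an independent extra sign $\xi_{n+1}$ with coefficient $\pm u$, use the symmetry of $S_n$ to equate $\Pr{S_n=u}$ with $\Pr{S_n+u\,\xi_{n+1}=0}$, and then apply the classical Littlewood--Offord bound to the $(n+1)$-term sum. The only cosmetic difference is that the paper compresses your conditioning-plus-symmetry step into the single line $\Pr{S_n=u}=\Pr{S_n=u\,\xi_{n+1}}$ ``by symmetry'' and sets $a_{n+1}:=-u$ rather than $u$.
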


\begin{proof}
Let $\xi_{n+1}$ be a mean zero random variable taking values in $\{-1,1\}$, 
and which is independent of $\xi_1,\ldots,\xi_n$. Then
\begin{align*} 
\Pr{\sum_{k=1}^n a_k \xi_k = u} 
&= \Pr{\sum_{k=1}^n a_k \xi_k = u \xi_{n+1}} \quad \text{(by symmetry)} \\
&= \Pr{\sum_{k=1}^{n+1} a_k \xi_k = 0}		\quad \text{(where $a_{n+1} := -u$)} \\
&\le P(n+1). 
\end{align*} 
The proof is complete.
\end{proof}

\section{Resilience}  \label{s: resilience}

To prove Theorem~\ref{thm: resilience}, we have to show that it is unlikely
that there exists a vector $u \in \{-1,1\}^n$,
and coefficients $a_1,\ldots,a_m \in \R$ at least two of which are non-zero, such that:
\begin{equation}	\label{eq: combination}
\sum_{k=1}^m a_k x_k^{\le d} = u^{\le d}.
\end{equation}
Our argument will be a little different depending on how many coefficients $a_k$ are nonzero. We will first analyze the case of ``long combinations'' where at least $n^{0.01}$ coefficients are nonzero, and then the remaining case of ``short combinations''. 

\subsection{Long combinations}			\label{s: long}

Let $\Plong$ denote the probability 
that there exists a vector $u \in \{-1,1\}^n$ and coefficients $a_1,\ldots,a_m \in \R$ 
at least $n^{0.01}$ of which are nonzero, and such that \eqref{eq: combination} holds. 
Our goal is to bound $\Plong$.

\medskip

{\em Step 1. Extracting two batches of equations.}
We can view \eqref{eq: combination} 
as a system of $\binom{n}{\le d}$ linear equations 
in variables $a_1,\ldots,a_m$, and we can write it as
$$
\sum_{k=1}^m a_k \big( x_k^{\le d} \big)_I = \big( u^{\le d} \big)_I, 
\quad \text{or equivalently as} \quad
\sum_{k=1}^m a_k \prod_{i \in I} x_{ki} = \prod_{i \in I} u_{i}, 
$$
for each subset $I \subset [n]$ with at most $d$ elements. 
We will consider two subsets, or ``batches'', of these equations. 
The first batch will be used to determine the coefficients $(a_k)$, 
and the second batch will be used to bound the probability. 

Fix an integer $1 \le n_0 \le n$ whose value will be determined later. 
The first batch will be defined by the subsets:
$$
I \subset \binom{[n_0]}{\le d}
$$ 
and the second batch, by the subsets: 
$$
I \subset \binom{n_0+1,\ldots,n}{1}.
$$
Thus, $\sum_{k=1}^m a_k \prod_{i \in I} x_{ki}$ is a polynomial in the first $n_0$
variables $x_1,\ldots,x_{n_0}$ in the first batch, and we have a linear form in the remaining variables $x_{n_0+1}, \ldots, x_n$ in the second batch.

This gives us two systems of stochastically independent equations. 
We can rewrite them as follows. The first batch is given by:
\begin{equation}	\label{eq: first batch rewritten}
\sum_{k=1}^m a_k \bar{x}_k^{\le d} = \bar{u}^{\le d}
\end{equation}
where the vector $\bar{u} \in \{-1,1\}^{n_0}$ is obtained from $u \in \{-1,1\}^n$ 
by keeping only the first $n_0$ coefficients, and similarly for the vectors 
$\bar{x}_k \in \{-1,1\}^{n_0}$. 
The second batch is given by:
$$
\sum_{k=1}^m a_k x_{ki} = u_i, \; n_0 < i \le n.
$$


\medskip

{\em Step 2: The first batch determines the coefficients, the second batch bounds the probability.}
Suppose that $n_0$ is chosen so that:
\begin{equation}	\label{eq: m desired long}
m < \binom{n_0 - \log\binom{n_0}{\le d} - t}{\le d}.
\end{equation}
Then, by Theorem~\ref{thm: ASW}, the random vectors 
$\bar{x}_1^{\le d},\ldots,\bar{x}_m^{\le d}$
are linearly independent with probability larger than $1-2^{-t}$. 
In this case, the first batch of equations has full rank. 
Let us condition on a realization of $\bar{x}_1, \ldots, \bar{x}_m$
for which the linear independence does hold.

Suppose the event $\Plong$ occurs. 
The vector $u \in \{-1,1\}^n$ in \eqref{eq: combination} 
can be chosen in $2^n$ ways; let us fix it. 
Since $\bar{x}_k^{\le d}$ and $\bar{u}^{\le d}$ are fixed at this point, 
linear independence implies that the coefficients $(a_k)$ are uniquely determined by 
the first batch of equations.
Thus the coefficients $(a_k)$ are now fixed, too. 
Put them in the second batch of equations, 
which is stochastically independent from the first. 
This reasoning gives
\begin{equation}	\label{eq: Plong prelim}
\Plong \le 2^{-t}
  + 2^n \cdot \max_{a,u} \Pr{ \sum_{k=1}^m a_k x_{ki} = u_i, \; n_0 < i \le n }
\end{equation}
where the maximum is over all vectors $a = (a_k)$ with at least $n^{0.01}$ nonzero coefficients, 
and over all vectors $u = (u_i)$ with $\pm 1$ coefficients.

\medskip

{\em Step 3: Applying the Littlewood-Offord Lemma.}
The Littlewood-Offord Lemma (Lemma~\ref{lem: Littlewood-Offord} and Lemma~\ref{lem: bounds on Pn}) can help us bound the probability of each equation in \eqref{eq: Plong prelim}. 
Indeed, since at least $n^{0.01}$ coefficients $a_k$ are nonzero, we get 
$$
\Pr{ \sum_{k=1}^m a_k x_{ki} = u_i } \le P(n^{0.01}) \le \frac{C'}{n^{1/8}}
$$
for each $i$. Since all $n-n_0$ such equations in \eqref{eq: Plong prelim} are stochastically independent, this implies
$$
\Plong 
\le 2^{-t} + 2^n \cdot \Big( \frac{C'}{n^{1/8}} \Big)^{n-n_0}.
$$

Now is a good time to select a value for $n_0$. Let us set:
\begin{equation}	\label{eq: n0 long}
n_0 = n - \frac{C''n}{\log n}
\end{equation}
where the absolute constant $C''$ is large enough. 
This choice allows us to have:
$2^n \cdot ( C''/n^{1/8} )^{n-n_0} \le 2^n \cdot 2^{-2n} = 2^{-n}$.
Furthermore, since $t \le n$ by assumption, $2^{-n} \le 2^{-t}$, and we obtain the bound:
$$
\Plong 
\le 2^{1-t}.
$$

\medskip

{\em Step 4: Checking the bound on $m$.}
It remains to check that our choice of $n_0$ also 
makes our bound \eqref{eq: m desired long} valid.
An elementary bound on binomial sums (Lemma~\ref{lem: binomial bounds} in the Appendix)
yields:
$$
\log \binom{n}{\le d} 
\le d \log (en_0) 
\le d \log (en) 
\le \frac{n}{\log n}
$$
by the assumption on $d$. Here we also used that $n$ can be assumed to be sufficiently large
(larger than any given absolute constant); otherwise the assumption on $m$ in Theorem~\ref{thm: resilience} is vacuous if the absolute constant $C$ is chosen large enough. Similarly,
$m$ and $t$ can be assumed to be sufficiently large. 
We will repeatedly use this in the rest of the argument. 

This and our choice \eqref{eq: n0 long} of $n_0$ give:
$$
\binom{n_0 - \log\binom{n_0}{\le d} - t}{\le d}
\ge \binom{n - \frac{(C''+1) n}{\log n} - t}{\le d}
> m
$$
where the last inequality is the theorem's assumption on $m$ 
with $C = C''+1$.
We have checked that \eqref{eq: m desired long} is valid, 
and thus completed the analysis of the long combinations.

\subsection{Short combinations}

Let $\Pshort$ denote the probability 
that there exists a vector $u \in \{-1,1\}^n$, and coefficients $a_1,\ldots,a_m \in \R$ 
at least two and at most $n^{0.01}$ of which are nonzero, such that \eqref{eq: combination} holds. Our goal is to bound $\Pshort$.

\medskip

{\em Step 1: Fixing the pattern of non-zero coefficients.}
Let us first address a simpler problem where the pattern of non-zero coefficients $a_k$ is fixed. 
Namely, let us require that the non-zero coefficients $a_k$ be exactly the first $m_0$ ones, where 
$$
m_0 \in [2,n^{0.01}]
$$ 
is a fixed integer. Denote the probability in this simplified problem by $P_{m_0}$.
Thus, $P_{m_0}$ is the probability that there exists a vector $u \in \{-1,1\}^n$ and nonzero coefficients $a_1,\ldots,a_{m_0} \in \R$, satisfying:
\begin{equation}	\label{eq: combination short}
\sum_{k=1}^{m_0} a_k x_k^{\le d} = u^{\le d}.
\end{equation}

\medskip

{\em Step 2: Two batches of equations.}
We define and analyze two batches of equations in the same way as in our analysis 
of long combinations in Section~\ref{s: long}, except that the value of $n_0$ in this case will be chosen differently later in the proof. Indeed, if:
\begin{equation}	\label{eq: m desired short}
m_0 < \binom{n_0 - \log\binom{n_0}{\le d} - t}{\le d},
\end{equation}
repeating the argument from Section~\ref{s: long} we get:
\begin{equation}	\label{eq: Pshort prelim}
P_{m_0} \le 2^{-t}
  + 2^n \cdot \max_{a,u} \Pr{ \sum_{k=1}^m a_k x_{ki} = u_i, \; n_0 < i \le n }
\end{equation}
where the maximum is over all vectors $a = (a_k)$ with all nonzero coefficients, 
and over all vectors $u = (u_i)$ with $\pm 1$ coefficients.

\medskip

{\em Step 3: Applying the Littlewood-Offord Lemma.}
The Littlewood-Offord Lemma (Lemma~\ref{lem: Littlewood-Offord nonzero RHS} and Lemma~\ref{lem: bounds on Pn}) can help us bound each probability term in \eqref{eq: Plong prelim}. 
Indeed, since all $a_k$ and $u_i$ are nonzero,
we get for each $i$:
$$
\Pr{ \sum_{k=1}^{m_0} a_k x_{ki} = u_i } 
\le P(m_0+1) \le \frac{3}{8}.
$$
Since all $n - n_0$ such equations in \eqref{eq: Pshort prelim} are stochastically independent, this implies:
$$
P_{m_0} \le 2^{-t}
  + 2^n \Big( \frac{3}{8} \Big)^{n - n_0}.
$$

Now is a good time to select a value for $n_0$. Let us set:
\begin{equation}	\label{eq: n0 short}
n_0 = 0.1 n.
\end{equation}
This choice guarantees that $2^n (3/8)^{n-n_0} \le (0.82)^n$. 
Furthermore, since $t \le n/4$ by assumption, we have $(0.82)^n \le 2^{-t}$. 
Thus we conclude that: 
$$
P_{m_0} \le 2^{1-t}.
$$

\medskip

{\em Step 4: Unfixing the pattern of non-zero coefficients.}
In the beginning of the proof, we made a simplifying assumption that 
the support of the coefficient vector $a = (a_k)$ be the set $[m_0]$. 
The same argument holds if we replace $[m_0]$ by any other subset of $[m]$ of cardinality $m_0$. 
Thus, taking the union bound over all $\binom{m}{m_0}$ ways of choosing the support of $a$,
and over all $m_0 \in [2,n^{0.01}]$ allowed sizes of the support, we obtain:
$$
\Pshort \le \sum_{m_0=2}^{n^{0.01}} \binom{m}{m_0} P_{m_0} 
\le \binom{m}{n^{0.01}} 2^{1-t}.
$$
Recall that $n$, $t$ and $m < \log\binom{n}{\le d}$ are sufficiently large, 
so using elementary bounds on the binomial coefficients 
(Lemma~\ref{lem: binomial bounds} in the Appendix) we get:
$$
\log\binom{m}{n^{0.01}}
\le 2 n^{0.01} d \log n
\le t/2
$$
by the assumption on $t$. This allows us to conclude that:
$$
\Pshort \le 2^{t/2} 2^{1-t} = 2^{1-t/2} \le 2^{-t/3}.
$$

\medskip

{\em Step 4: Checking the bound on $m_0$.}
It remains to be checked that our choice of $n_0$ is consistent with the bound \eqref{eq: m desired short}.
Note that using elementary bounds on binomial coefficients (Lemma~\ref{lem: binomial bounds}):
$$
\log \binom{n_0}{\le d} 
\le d \log (en_0) 
\le d \log n 
\le \frac{0.01n}{3} = \frac{n_0}{3}
$$
by our assumption on $d$. Similarly: 
$$
t \le 0.001n \le \frac{0.01n}{3} = \frac{n_0}{3}.
$$
This yields:
$$
\binom{n_0 - \log\binom{n_0}{\le d} - t}{\le d}
> \binom{n_0/3}{\le d}
\ge \frac{n_0}{3}
\ge 0.01 n
\ge m_0
$$
where the last inequality follows for large $n$ from the constraint 
$m_0 \le n^{0.01}$.
Thus we have checked that \eqref{eq: m desired long} is valid, 
completing the analysis of the case of short combinations.

\medskip

{\em Step 5: Conclusion of the proof of Theorem~\ref{thm: resilience}.}
Combining our results for long and short combinations, we obtain that
the overall failure probability is at most
$$
\Plong + \Pshort \le 2^{1-t} + 2^{-t/3}
\le 2^{-t/4}
$$
for sufficiently large $t$.
This concludes the proof of Theorem~\ref{thm: resilience}.
\qed

\section{Proof of Corollary~\ref{cor: main}} 		\label{s: corollary}

We will derive both lower and upper bounds from Theorem~\ref{thm: main}. 
Let us start from the lower bound, which is much simpler. 

\subsection{Lower bound}

Note that
$$
\binom{n}{\le d}
> \binom{n}{d}
= \frac{n(n-1)\cdots(n-d+1)}{d!}
> \frac{(n-d)^d}{d!}
= \Big( 1 - \frac{d}{n} \Big)^d \cdot \frac{n^d}{d!}.
$$
Then Theorem~\ref{thm: main} yields
$$
\log_2 T(n,d)
> \Big[ \Big( 1 - \frac{C}{\log n} \Big) \Big( 1 - \frac{d}{n} \Big) \Big]^d \cdot \frac{n^{d+1}}{d!}
\ge \Big( 1 - \frac{2C}{\log n} \Big)^d \cdot \frac{n^{d+1}}{d!},
$$
The last inequality holds whenever $d/n \le C/\log n$; if $C$ is sufficiently large, this always holds
in the range $1 \le d \le n^{0.9}$. Rename $2C$ to $C$ to complete the proof 
of the lower bound in Corollary~\ref{cor: main}.

\subsection{Upper bound: five cases}

As we are about to show, the upper bound in Corollary~\ref{cor: main} 
holds in the entire range $n > 1$, $1 \le d \le n$. 
The argument will again follow from Theorem~\ref{thm: main} together with  
fine estimates of binomial sums. The desired inequality is exact 
and holds even for small values of $d$ and $n$. The argument is elementary but somewhat long since several different cases must be considered.

Our reasoning will be different depending on whether the numbers $n$ and $d$ 
are small or large, and also on 
the size of the ratio $n/d$. These distinctions break the argument into five cases, 
which are summarized in Figure~\ref{fig: cases} below. 
As we see from the figure, the critical ratio for $n/d$ in the argument is given by: 
\begin{equation}	\label{eq: ratio}
\a = 3.0528.
\end{equation}

\begin{figure}[htp]		
  \includegraphics[width=.35\textwidth]{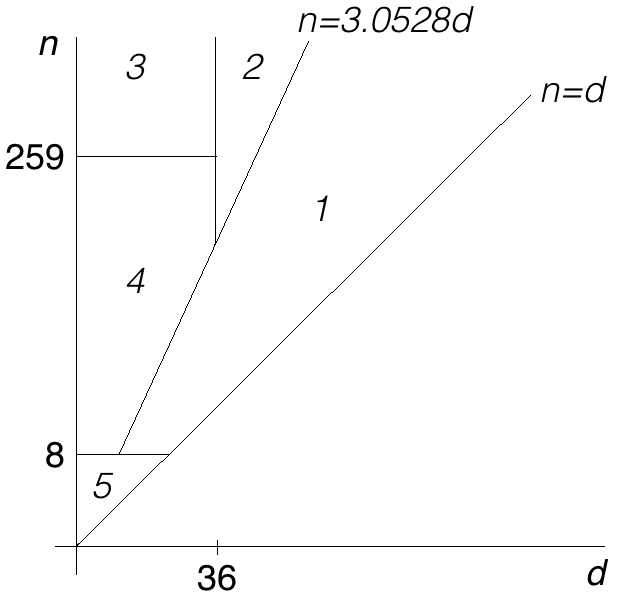} 
  \caption{The proof has five cases, which are determined by the five regions the points $(n,d)$ belong to. 
    In region~1, the bound is trivial since it exceeds the number of all Boolean functions. 
    In regions~2 and 3, the result is derived using fine approximations of binomial sums. 
    In the finite regions~4 and 5, the bound is tested numerically by computer for all pairs $(n,d)$ in those regions.}
  \label{fig: cases}
\end{figure}

\subsection{Case 1: $8 \le n \le \a d$}

We are going to check that in this range:
\begin{equation}	\label{eq: bigger than everything}
\frac{n^{d+1}}{d!} > 2^n.
\end{equation}
This would trivially yield the upper bound in Corollary~\ref{cor: main},
since $T(n,d)$ is bounded by the total number of Boolean functions $2^{2^n}$. And this would yield:
$$
\log_2 T(n,d) \le 2^n < \frac{n^{d+1}}{d!}.
$$

In order to show \eqref{eq: bigger than everything}, Stirling's approximation can be used to check  
that the bound: 
\begin{equation}	\label{eq: Stirling}
d! \le (d/e)^d \sqrt{d} \, e
\end{equation}
holds for all positive integers $d$, and so we have:
$$
\frac{n^{d+1}}{d!} \ge \left( \frac{en}{d} \right)^d \frac{n}{e \sqrt{d}} 
>\left( \frac{en}{d} \right)^d.
$$
In the last step, we used that $\frac{n}{e \sqrt{d}} > 1$ due to the constraints $n \ge 8$ and $d \le n$.
Thus, in order to complete the proof in this case, it suffices to show that
$$
\left( \frac{en}{d} \right)^d \ge 2^n.
$$
Taking logarithms on both sides, we see that this is the same as showing that 
$$
\log_2(ex) \ge x \quad \text{where } x = n/d.
$$

A quick check shows that $\log_2(ex) \ge x$ for all values $x \in [1, \a]$
where $\a = 3.0528$ as in \eqref{eq: ratio}.
It remains to note that the specific value $x = n/d$ falls into the interval $[1, \a]$: 
the lower bound holds since $d \le n$, and the upper 
bound is exactly our assumption that $n \le \alpha d$. 

The proof in Case~1 is complete.

\subsection{Case 2: $n > \a d$, $d \ge 36$}

In this regime, we use Theorem~\ref{thm: upper bound sharp}. To simplify its conclusion,  
let us find a convenient bound on:
$$
m = \binom{n}{\le d}.
$$
Lemma~\ref{lem: binomial sums} gives:
$$
m \le \frac{n^d}{d!} \cdot \frac{n+1-d}{n+1-2d}.
$$
Furthermore, we have:
$$
\frac{n+1-d}{n+1-2d} 
= 1 + \frac{d/n}{1+1/n-2d/n} 
\le 1 + \frac{d/n}{1 - 2/\a}
$$
where the last inequality follows since $1/n \ge 0$ and $d/n \le 1/\alpha$.
Thus:
\begin{equation}	\label{eq: D0}
m \le \left\lfloor \frac{n^d}{d!} \left( 1 + \frac{d/n}{1 - 2/\a} \right) \right\rfloor =: m_0.
\end{equation}

Using Theorem~\ref{thm: upper bound sharp}, we obtain:
$$
T(n,d) 
  < 2 \binom{2^n}{\le m_0}
  \le 2 \left( \frac{e 2^n}{m_0} \right)^{m_0},
$$
where we used Lemma~\ref{lem: binomial bounds} in the last step. 
Thus:
\begin{equation}	\label{eq: logT begin}
\log_2 T(n,d) < 1 + m_0 \left( \log_2 e + n - \log_2 m_0 \right).
\end{equation}

To proceed, we need a convenient lower bound on $\log_2 m_0$. 
The definition of $m_0$ and Stirling's approximation \eqref{eq: Stirling} give:
$$
m_0 
\ge \frac{n^d}{d!}
\ge \left( \frac{en}{d} \right)^d \frac{1}{\sqrt{d} \, e}, 
$$
so: 
\begin{align*} 
\log_2 m_0 
  &\ge d \log_2 \left( \frac{en}{d} \right) - \frac{1}{2} \log_2 d - \log_2 e \\
  &\ge d \log_2 (e \alpha) - \frac{1}{2} \log_2 d - \log_2 e  \quad \text{(since $n > \a d$ by assumption).}
\end{align*}

Substituting this lower bound and the definition \eqref{eq: D0} of $m_0$ into \eqref{eq: logT begin}, yields:
$$
\log_2 T(n,d) 
< 1 + \frac{n^d}{d!} \left( 1 + \frac{d/n}{1 - 2/\a} \right) 
	\left( n - f(d) \right)
$$
where
$$
f(d) := d \log_2 (e \alpha) - \frac{1}{2} \log_2 d - 2 \log_2 e.
$$
Redistributing the powers of $n$, this is the same as: 
$$
\log_2 T(n,d) 
\le 1 + \frac{n^{d+1}}{d!} \left( 1 + \frac{d/n}{1 - 2/\a} \right) \left( 1 - \frac{f(d)}{n} \right).
$$

We claim that:
\begin{equation}	\label{eq: fd}
f(d) \ge \frac{d}{1 - 2/\a} \quad \text{for all } d \ge 36.
\end{equation}
By definition of $f(d)$, we can write this desired inequality as:
\begin{equation}	\label{eq: inequality for d}
\left( \log_2(e\a) - \frac{1}{1 - 2/\a} \right) d - \frac{1}{2} \log_2 d - 2 \log_2 e > 0.
\end{equation}
Since $\a = 3.0528$ by \eqref{eq: ratio} and since $2 \log_2 e \approx 2.88539$, 
this inequality is (slightly) weaker than: 
$$
0.1528 d - 0.5 \log_2 d - 2.8854 \ge 0.
$$
A quick verification shows that this inequality holds for all real values $d \ge 36$, 
so the claim \eqref{eq: fd} is true. 

Using \eqref{eq: fd} we obtain:
\begin{align*} 
\log_2 T(n,d) 
  &< 1 + \frac{n^{d+1}}{d!} \left( 1 + \frac{d/n}{1 - 2/\a} \right) \left( 1 - \frac{d/n}{1 - 2/\a} \right) \\
  &= 1 + \frac{n^{d+1}}{d!} \left(1 - \left( \frac{d/n}{1 - 2/\a} \right)^2 \right) \\
  &= \frac{n^{d+1}}{d!} + 1 - \frac{n^{d+1}}{d!} \left( \frac{d/n}{1 - 2/\a} \right)^2.
\end{align*}
Now, by \eqref{eq: ratio} $0 < 2/\a < 1$, so $(1-2/\a)^2 < 1$. Thus:  
$$
\frac{n^{d+1}}{d!} \left( \frac{d/n}{1 - 2/\a} \right)^2
> \frac{n^{d+1}}{d!} \left(\frac{d}{n} \right)^2
= \frac{n^{d-1}}{(d-1)!} \cdot d \ge 1
$$
since $n \ge d \ge 1$. 
This shows that: 
$$
\log_2 T(n,d) < \frac{n^{d+1}}{d!},
$$
which completes the proof in Case~2.

\subsection{Case 3: $n \ge 259$, $d \le 35$}

In this regime: 
$$
\frac{n}{d} \ge \frac{259}{35} = 7.4.
$$
Thus, we may repeat the argument of Case~2 for the larger, and thus better, value 
$\a = 7.4$. With this value of $\alpha$, the inequality \eqref{eq: inequality for d} is (slightly) weaker than:
$$
2.9598 d - 0.5 \log_2 d - 2.8854 \ge 0.
$$
It is easy to check numerically by computer that this inequality holds for all real values $d \ge 1$. 
The rest of the argument is identical to that of Case~2.

\subsection{Case 4: $8 \le n \le 258$, $d \le 35$, $n > \a d$}

Theorem~\ref{thm: upper bound sharp} and Lemma~\ref{lem: binomial bounds}
on the binomial sum
yield:
$$
T(n,d) 
< 2 \binom{2^n-1}{\le m} 
\le 2 \left( \frac{e (2^n-1)}{m} \right)^{m}.
$$
Taking logarithms on both sides gives:
$$
\log_2 T(n,d) < 1 + m \log_2 \left( \frac{e (2^n-1)}{m} \right) =: t(n,d).
$$
It is easy to check numerically by computer that:
$$
t(n,d) \le \frac{n^{d+1}}{d!}
$$
for all pairs $(n,d)$ in the current range.

\subsection{Case 5: $1 < n \le 7$, $1 \le d \le n$}

This is similar to Case~4, except we will not use any bounds on the binomial sum.
As before: 
$$
T(n,d) 
< 2 \binom{2^n-1}{\le m} 
$$
and it is easy to check numerically by computer that:
$$
\log_2 \Big[ 2 \binom{2^n-1}{\le m} \Big] \le \frac{n^{d+1}}{d!} 
$$
for all pairs $(n,d)$ in the current range.

The proof of Corollary~\ref{cor: main} is complete. \qed

\section{Further questions} 		\label{s: further questions}

The results and especially the methods of this paper lead to a number of interesting directions 
for further study. 

\subsection{Polynomial capacity of sets.}

Given a finite set $S$ of points in $\R^n$, we can define the capacity $C_d(S)$ to be the base two logarithm of the number of different ways $S$ can be split by polynomials of degree $d$. We can derive some bounds on the polynomial capacity of sets.

\begin{theorem}[Polynomial set capacity]			\label{thm: polynomial set capacity}
  Consider a finite subset $S \subset \R^n$, where $n>1$.
  Then, for any degree $1 < d \le n$, we have:
  $$
  C_d(S) \le 1 + \log_2 \binom{|S|-1}{\le m-1}
  \le m \log_2 \vert S\vert,  
  $$ 
  where:
$$
m=\binom{n+k-1}{\le d}
\le \Big( \frac{2en}{d} \Big)^d.
$$  
\end{theorem}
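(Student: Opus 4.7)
The plan is to mirror the argument of Theorem~\ref{thm: upper bound sharp} (the upper bound on $T(n,d)$), replacing the Boolean cube by the arbitrary finite set $S$. The key observation is that for a general $S \subset \R^n$, we cannot collapse monomials using $x_i^2=1$, so the tensor lift has to carry \emph{all} monomials of degree at most $d$ in $n$ variables, of which there are $\binom{n+d}{d}$; by the hockey-stick identity this equals $\sum_{j=0}^{d}\binom{n+j-1}{j}=\binom{n+d-1}{\le d}$, which one can see is the intended value of $m$.

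First I would define the monomial lift $\tilde x \in \R^m$ whose coordinates are indexed by multisets $I \subset [n]$ of size at most $d$ with $(\tilde x)_I = \prod_{i \in I} x_i$, and set $\tilde S = \{\tilde x : x \in S\}$. Any polynomial of degree $\le d$ can be written as $p(x)=\sum_I a_I \prod_{i\in I} x_i = \langle a, \tilde x\rangle$, so the exact analog of Lemma~\ref{lem: linearizing} gives a bijection between polynomial threshold functions of degree $d$ on $S$ and homogeneous linear threshold functions on $\tilde S \subset \R^m$.

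Next, Lemma~\ref{lem: functions vs hyperplanes} identifies the latter count with the number of regions of the central arrangement $\{\tilde x^\perp : x\in S\}$ of $|S|$ hyperplanes in $\R^m$. Part~\ref{part: central} of Lemma~\ref{lem: Zaslavsky} then yields at most $2\binom{|S|-1}{\le m-1}$ regions. Each partition of $S$ by a degree-$d$ polynomial surface corresponds to two threshold functions (differing by an overall sign), so the number of partitions is at most $\binom{|S|-1}{\le m-1}$, giving
$$
C_d(S) \;\le\; \log_2 \binom{|S|-1}{\le m-1} \;\le\; 1 + \log_2 \binom{|S|-1}{\le m-1},
$$
which is the first stated inequality.

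For the second inequality, I would use the elementary estimate $\binom{|S|-1}{\le m-1} \le \tfrac{1}{2}|S|^m$ (which holds once $|S|\ge 2$, for instance via $\binom{|S|-1}{j}\le |S|^j/2$ summed geometrically), so that $1+\log_2\binom{|S|-1}{\le m-1}\le m\log_2|S|$. Finally, to bound $m$, I would apply the standard inequality $\binom{N}{\le d}\le (eN/d)^d$ (Lemma~\ref{lem: binomial bounds}) with $N=n+d-1$, then use $d\le n$ to get $eN/d \le 2en/d$, yielding $m\le(2en/d)^d$. The only real obstacle is the routine verification of the elementary binomial inequalities; conceptually the proof is a direct transcription of the tensor lift / central arrangement argument from Section~\ref{s: linearization}, with $\binom{n}{\le d}$ replaced by the full multiset count $\binom{n+d}{d}$.
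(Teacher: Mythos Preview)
Your proof is correct and follows essentially the same route as the paper: lift $S$ via the full monomial map, reduce polynomial threshold functions on $S$ to homogeneous linear threshold functions on the lifted set, and bound the latter by the central-arrangement region count; the paper does the same, except it maps into $\R^{m-1}$ and treats the constant term as an affine bias, then defers the final bound to the reference \cite{baldi2019capacity} rather than invoking Lemmas~\ref{lem: functions vs hyperplanes} and~\ref{lem: Zaslavsky} directly as you do. One small slip worth noting: the hockey-stick identity gives $\binom{n+d}{d}=\sum_{j=0}^{d}\binom{n+j-1}{j}$, which is \emph{not} the same as $\binom{n+d-1}{\le d}=\sum_{j=0}^{d}\binom{n+d-1}{j}$, so your identification of the two is off (the paper's stated formula for $m$ itself carries a typo here); this is harmless for the proof, since either value of $m$ upper-bounds the lifted dimension and hence yields a valid bound on $C_d(S)$.
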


\begin{proof}
First, it is easy to see that the number of coefficients of a polynomial of degree $d$ in $n$ variables $x_1, \ldots,x_n$ is given by $m=\binom{n+k-1}{\le d}$, including the constant term (bias).
A vector $x \in \R^n$ can be canonically and injectively mapped into a 
vector $f(x) \in \R^{m-1}$ whose components are the various monomials. 
 Using this mapping, we can represent any polynomial $p(x)$ of degree $d$ over $\R^n$
as a linear affine function over $f(x)$. 
And vice versa, any linear affine function over $f(x)$ is a polynomial of degree $d$ over $x$.
For example, if $d=2$, the vector $x=(x_1,x_2) \in \R^2$ is canonically mapped 
to the vector $f(x)=(x_1,x_2, x_1x_2, x_1^2,x_2^2) \in \R^5$.
Any polynomial $p(x) = a_0 + a_1 x_1 + a_2 x_2 + a_{12} x_1 x_2 + a_{11} x_1^2 + a_{22} x_2^2$
over $\R^2$ is clearly an affine function of $f(x)$, and vice versa.
Therefore: 
$$
C_d(S) = C_1(f(S))=C(f(S)).
$$
We complete the proof by applying 
the known bounds on the capacity of sets with respect to linear threshold functions (see \cite{baldi2019capacity})
to the set 
$f(S) \subset \R^{m-1}$, noting that $f(S)$ has the same cardinality as $S$ since $f$ is injective. 
\end{proof}
Note that if we apply Theorem~\ref{thm: polynomial set capacity} to the hypercube $S=H^n$, we get: 
$$ C_d(H^n) \leq n \Big( \frac{2en}{d} \Big)^d,
$$
which is somewhat weaker asymptotically than the result in 
\cite{baldi2018booleanv1} giving: 
$$C_d(H^n)=C_d(n,1)= \frac{ n^{d+1}}{d!}(1+o(1)).$$ Note also that the general lower bound:
$ 1+\log_2 \vert S \vert \leq  C_d(S)$, and its improved version when $S$ is a subset of the Boolean cube (\cite{baldi2019capacity}:
$ \log_2^2 \vert S\vert/16 \leq  C_d(S)$ are trivially satisfied. 
Thus an open research area here is to obtain better estimates of the polynomial capacity of finite sets of points, including subsets of the hypercube.

\subsection{Polynomial threshold functions with high degrees.}

In this paper we determined the asymptotic behavior of $T(n,d)$, the number of $n$-variable polynomial threshold functions with bounded or slowly growing degrees $d$. 
It would be interesting to find out what happens if the degree $d$ grows rapidly, 
for example linearly with $n$. 
It is plausible that the upper bound on $T(n,d)$ that we stated 
in Theorem~\ref{thm: upper bound sharp} may be tight,
and the following conjecture mentioned by M.~Anthony \cite{Anthony classification} could hold:

\begin{conjecture}			\label{conj: Tnd sharp}
  The number $T(n,d)$ of $n$-variable polynomial threshold functions of degree $d$ satisfy 
  \begin{equation}	\label{eq: Tnd conjecture}
  T(n,d) = \left( 2-o(1) \right) \binom{2^n-1}{\le m-1}
  \quad \text{where} \quad
  m = \binom{n}{\le d}
  \end{equation}
  for all degrees $1 \le d \le n$ as $n \to \infty$. 
\end{conjecture}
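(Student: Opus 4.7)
The plan is to prove the matching lower bound $T(n,d) \ge (2 - o(1))\binom{2^n-1}{\le m-1}$; the upper bound in Conjecture~\ref{conj: Tnd sharp} is already furnished by Theorem~\ref{thm: upper bound sharp}. By Lemmas~\ref{lem: functions vs hyperplanes} and \ref{lem: linearizing}, $T(n,d)$ equals the number of regions cut by the central arrangement of $2^n$ hyperplanes $(x^{\le d})^\perp$, $x \in \{-1,1\}^n$, inside $\R^m$. If the normal vectors $\{x^{\le d} : x \in \{-1,1\}^n\}$ were in linear general position, part~5 of Lemma~\ref{lem: Zaslavsky} would give exactly $2\binom{2^n-1}{\le m-1}$ regions; the conjecture asserts that the Boolean tensor arrangement, although very far from general position in any literal sense, still achieves this count up to a factor $1 - o(1)$.

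The first and decisive step is a sharp singularity statement for random $\pm 1$ tensors at the exact threshold $m = \binom{n}{\le d}$: if $x_1, \ldots, x_{m-1}$ are i.i.d.\ uniform in $\{-1,1\}^n$, then $x_1^{\le d}, \ldots, x_{m-1}^{\le d}$ are linearly independent with probability $1 - o(1)$. The currently available Theorem~\ref{thm: ASW} loses $\log\binom{n}{\le d}$ coordinates and therefore only admits a strictly smaller value of $m$; this gap is precisely what produces the factor $(1 - C/\log n)^d$ in Theorem~\ref{thm: main} and prevents the sharp constant. One would also have to upgrade the resilience Theorem~\ref{thm: resilience} to the same sharp threshold, so that with probability $1 - o(1)$ no other $u^{\le d}$, $u \in \{-1,1\}^n$, lies in the span. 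This resilience upgrade should follow from the sharp singularity statement by the Odlyzko-type scheme developed in Section~\ref{s: resilience}, provided the Littlewood--Offord bounds of Section~\ref{s: Littlewood-Offord} are applied with the sharp constants.

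Given these two sharp statements, a counting argument patterned on Lemma~\ref{lem: lots of uniques} yields $(1 - o(1))\binom{2^n}{m-1}$ distinct intersection subspaces of dimension one (the ``fullest'' flats of the arrangement), each contributing an antipodal pair of regions. To extract the sharp constant $2 - o(1)$ rather than merely a weaker lower bound, one must control the intersection poset at \emph{every} intermediate codimension at once and feed the result into Zaslavsky's formula, showing that the characteristic polynomial of the intersection lattice agrees with the general-position one up to a $(1 + o(1))$ factor at the evaluation point $t = -1$. Concretely, for each $k \le m - 1$ one needs that $(1-o(1))\binom{2^n}{k}$ of the $k$-element subsets of $\{-1,1\}^n$ yield a $k$-dimensional span, and that the overlap between the spans of different subsets is negligible.

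The main obstacle is the sharp singularity step. Even for $d = 1$ the analogous statement is Tikhomirov's theorem~\cite{Tikhomirov singularity} on the singularity of random $\pm 1$ matrices, which required a long line of work in random matrix theory. For $d \ge 2$ the difficulty is amplified dramatically because the coordinates of $x^{\le d}$ are strongly correlated, so the standard toolkit of $\varepsilon$-nets, swapping arguments, and inverse Littlewood--Offord theory must be lifted to the tensor setting. The forthcoming paper~\cite{V random tensors} mentioned in Section~1.7 attacks precisely this question and should furnish a usable, if quantitatively suboptimal, singularity bound; obtaining the fully sharp constant $2 - o(1)$ will likely require genuinely new tensor-structured anti-concentration techniques beyond what is presently available.
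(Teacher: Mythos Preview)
The statement you are attempting to prove is not a theorem in the paper; it is explicitly labeled a \emph{conjecture} (Conjecture~\ref{conj: Tnd sharp}) and is presented in Section~\ref{s: further questions} as an open problem. The paper provides no proof of it, and indeed remarks that ``this conjecture might be too strong and it possibly holds only after we take logarithms on both sides.'' So there is no proof in the paper against which to compare your proposal.

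Your write-up is not a proof either, and you are candid about this: it is a research outline that correctly identifies the upper bound as already established (Theorem~\ref{thm: upper bound sharp}) and locates the essential obstruction to the lower bound in the need for a \emph{sharp} singularity/resilience statement for random $\pm 1$ tensors at the exact threshold $m = \binom{n}{\le d}$, rather than the lossy version of Theorems~\ref{thm: ASW} and \ref{thm: resilience}. You also correctly note that even the $d=1$ case of such a sharp statement is at the level of Tikhomirov's recent breakthrough, and that $d \ge 2$ would require new ideas. All of this is accurate as a diagnosis of why the conjecture is open, but none of it constitutes a proof. The additional step you sketch---controlling the intersection poset at every codimension so that Zaslavsky's formula matches the general-position count to within $1+o(1)$---is itself a substantial unproved claim, not obviously reducible to the resilience statement alone. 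In short: your outline is a reasonable roadmap, consistent with the paper's own framing of the problem, but it does not close the gap, and the paper does not claim to either.
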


This conjecture might be too strong and it possibly holds only after we take logarithms on both sides of \eqref{eq: Tnd conjecture}.

For bounded or mildly growing degrees $d$, Conjecture~\ref{conj: Tnd sharp} 
easily implies the main result of this paper, Theorem~\ref{thm: main}.
For $d=n/2$, Conjecture~\ref{conj: Tnd sharp} and a careful asymptotic analysis of the bound 
\eqref{conj: Tnd sharp} implies the Wang-Williams conjecture mentioned in the introduction, which states that most Boolean functions can be expressed as polynomial threshold functions of degree $n/2$ (see \cite{Anthony classification}).
Finally, for $d=n$, Conjecture~\ref{conj: Tnd sharp} is trivial. In this case it gives $T(n,n) = 2^{2^n}$, 
which is equivalent to the fact that all Boolean functions are polynomial threshold functions of degree at most $n$. 

\subsection{Polynomial threshold functions with restricted coefficients}

In some applications (e.g. discrete synapses in neural networks), 
it is useful to consider polynomial threshold functions $f(x) = \sgn(p(x))$ 
where $p(x)$ is required to have bounded, discrete, or positive coefficients. 

\subsubsection{Integer coefficients}

By an easy perturbation argument, we can always force $p(x)$ to have integer coefficients. 
How large are these coefficients? If $d=1$, i.e. in the case of linear threshold functions, all  
coefficients of $p(x)$ are bounded by $n^{n/2 + o(n)}$. This bound is tight due to results of 
J. H\r{a}stad \cite{Hastad} and N.~Alon and V.~Vu \cite{Alon-Vu}. 
For any higher degree $d \ge 2$, 
a similar result is described in 
\cite{Podolskii}.

Beyond these results, it may be natural to look for a bound on the coefficients of $p(x)$
that holds for {\em most} (or many) polynomial threshold functions $f(x) = \sgn(p(x))$, 
e.g. for $2^{(1+o(1)) n^{d+1}/d! }$ of them. What is then the optimal bound? Is it significantly 
smaller than $n^{n/2 + o(n)}$, the bound that holds for {\em all} functions $f(x)$?
This question seems to be open for all degrees including $d=1$.

A related problem is when we require the integer coefficients of $p(x)$ to be bounded 
by a given number $M$. How many polynomial threshold functions $f(x) = \sgn(p(x))$ can be generated with this restriction? What if we consider polynomials $p(x)$ with all $\pm 1$ coefficients? 
Since each such polynomial consists of $\binom{n}{\le d}$ monomial terms and each term is assigned an $\pm 1$ coefficient, 
there are at most $2^{\binom{n}{\le d}}$ polynomials with $\pm 1$ coefficients. Thus the number of corresponding polynomial threshold functions is bounded by $2^{\binom{n}{\le d}}$. Is this bound asymptotically tight? It is easy to check that the answer is positive for $d=1$, but for higher degrees the problem is non-trivial.

\subsubsection{Positive coefficients}

In some other situations (e.g. excitatory neurons in neural networks), it is natural to consider 
polynomial threshold functions $f(x) = \sgn(p(x))$ where the polynomials $p(x)$ have positive coefficients. 
How many polynomial threshold functions can be generated with this restriction? 

For $d=1$, one can answer this question easily by leveraging the symmetry of the Boolean cube $\{-1,1\}^n$ with respect to signs. Due to this symmetry, the number of homogeneous linear threshold functions $f(x) = \sgn(a_1 x_1 + \cdots a_n x_n)$ whose coefficients $a_k$ follow a given sign pattern is the same for each pattern. It follows that 
$$
\bar{T}^+(n,1) = \frac{\bar{T}(n,1)}{2^n}
$$
where $\bar{T}(n,1)$ denotes the number of homogeneous linear threshold functions, 
and $\bar{T}^+(n,1)$ denotes the number of such functions with positive coefficients. 
Since $\log_2 \bar{T}(n,1) = n^2 - o(n^2)$, we get 
$$
\log_2 \bar{T}^+(n,1) = n^2 - o(n^2) - n = n^2 - o(n^2).
$$
However, for higher degrees $d \ge 2$, the symmetry argument fails and the problem remains open. 

\subsection{Connections to information theory.}

The main result of this paper, Theorem~\ref{thm: main}, implies that we need essentially 
$n^{d+1}/d!$ bits to communicate a polynomial threshold function of degree $d$.
This can be viewed as $n^{d}/d!$ binary vectors of dimension $n$ and can intuitively be understood as communicating $n^{d}/d!$ 
{\it support} vectors, that is the $n^{d}/d!$ vectors of the Boolean hypercube that are closest to and on one side of the corresponding separating surface $p(x)=0$. Thus in this case, the set of vectors depends on the function and thus it needs to be be communicated. 
However we may consider fixing a set of vectors in advance -- one set for any function being communicated.  In this scheme, we need to send only the value of the function $f(x)$ on this set.
How well will such a scheme work? How large does the set of vectors needs to be for exact or 
approximate communication of any (or most) polynomial threshold functions of a given degree $d$?

\subsection{Boolean networks.}

In neural networks and other applications, one is interested in the behavior of entire networks (or circuits) of polynomial threshold functions, rather than single polynomial threshold functions. For a given circuit, one would like to estimate the number of different Boolean functions that can be realized using different weights. This question has seemed hopeless for a long time but we believe the results presented here can be used to make some progress, at least in the case of particular circuits 
that are widely used in applications. Results in this direction are described in \cite{baldi2018neuronal,baldi2019capacity} where we show how to estimate the number of functions that can be computed by fully connected networks, as well as shallow and deep layered feedforward networks of polynomial threshold gates.

\subsection{The geometry of boolean threshold functions}

As we noted in Section~\ref{s: hyperplane arrangements}, homogeneous linear threshold functions correspond to the regions of the hyperplane arrangement $x^\perp$, $x \in \{-1,1\}^n$. 
These regions are polyhedral cones in $\R^n$, and to study their geometry it is convenient to intersect them with
the unit Euclidean sphere. Thus we are looking at a decomposition of the sphere by $2^n$ central hyperplanes. From \eqref{eq: Zuev} we know that there are approximately $2^{n^2}$ regions in this decomposition. What else do we know about them? For example, what is the distribution of their area?
We can of course ask the same questions for $d>1$ as well.

These problems are related to the classical study of random Poission {\em tessellations} in stochastic geometry \cite{Calka}; see also \cite{Plan-Vershynin} for random tessellations on the sphere. 
However, the main new challenge here is to handle the discrete distribution induced by the Boolean cube.

\appendix
\section{Bounds on binomial sums}

Throughout the main body of this paper, we repeatedly used the following 
elementary and well known bounds on binomial sums.

\begin{lemma}[see e.g. Exercise~0.0.5 in \cite{V book}]	\label{lem: binomial bounds}
  For any integers $1 \le d \le n$, we have:
  $$
  \Big( \frac{n}{d} \Big)^d 
  \le \binom{n}{d} 
  \le \binom{n}{ \le d} 
  \le \Big( \frac{en}{d} \Big)^d.
  $$
\end{lemma}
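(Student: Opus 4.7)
The middle inequality $\binom{n}{d} \le \binom{n}{\le d}$ is an immediate consequence of the definition $\binom{n}{\le d} = \sum_{k=0}^d \binom{n}{k}$, so only the two outer estimates require any work. My plan is to handle each by a short direct calculation, with no auxiliary lemmas.

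For the lower bound, I would expand the binomial coefficient as the telescoping product
$$\binom{n}{d} = \prod_{k=0}^{d-1} \frac{n-k}{d-k}$$
and observe that each factor satisfies $(n-k)/(d-k) \ge n/d$, since this is equivalent to $d(n-k) \ge n(d-k)$, i.e.\ $nk \ge dk$, which holds by $n \ge d$. Multiplying the $d$ such inequalities together yields $\binom{n}{d} \ge (n/d)^d$.

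For the upper bound, I would apply the standard Chernoff-style trick: pick a free parameter $x \in (0,1]$ and note that $x^{k-d} \ge 1$ whenever $0 \le k \le d$. Hence
$$\binom{n}{\le d} = \sum_{k=0}^d \binom{n}{k} \le x^{-d}\sum_{k=0}^d \binom{n}{k} x^k \le x^{-d}\sum_{k=0}^n \binom{n}{k} x^k = x^{-d}(1+x)^n.$$
The optimized choice $x = d/n$ lies in $(0,1]$ by the hypothesis $1 \le d \le n$, and produces $\binom{n}{\le d} \le (n/d)^d(1+d/n)^n \le (en/d)^d$ after applying the elementary inequality $1+t \le e^t$ with $t = d/n$.

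No step presents a real obstacle; the only non-mechanical ingredient is the idea of inserting the factor $x^{k-d}$ with the optimized choice $x = d/n$, and this is a textbook maneuver.
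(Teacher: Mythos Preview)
Your proof is correct and is the standard textbook argument. The paper itself does not supply a proof of this lemma at all: it is stated in the appendix with a citation to an exercise in \cite{V book} and used as a black box throughout. So there is nothing to compare against beyond noting that your argument is precisely the kind of elementary computation the citation points to.
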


\begin{lemma}		\label{lem: more binomial bounds}
  For any integers $d$ and $n$ such that $1 \le d \le n/2$ and for any integer 
  $1 \le k \le n-d+1$, we have:
  $$
  \Big( 1-\frac{2k}{n} \Big)^d \binom{n}{\le d} \le \binom{n-k}{\le d} \le \binom{n}{\le d}.
  $$
\end{lemma}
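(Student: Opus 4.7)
The upper bound is immediate, since $\binom{n-k}{j} \le \binom{n}{j}$ for every $0 \le j \le d$; summing over $j$ gives $\binom{n-k}{\le d} \le \binom{n}{\le d}$. So all the work is in the lower bound, and my plan is to do it term-by-term.

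For each fixed $j \in \{0,1,\ldots,d\}$, I would write
$$
\frac{\binom{n-k}{j}}{\binom{n}{j}}
= \prod_{i=0}^{j-1} \frac{n-k-i}{n-i}
= \prod_{i=0}^{j-1} \left( 1 - \frac{k}{n-i} \right).
$$
The assumption $k \le n-d+1$ guarantees $n-k-i \ge n-k-d+1 \ge 0$ for every $i \le d-1$, so every factor in the product is nonnegative. The next step is to use the hypothesis $d \le n/2$: for $0 \le i \le d-1$ we have $n-i \ge n-d+1 > n/2$, hence $k/(n-i) < 2k/n$, and consequently each factor satisfies
$$
1 - \frac{k}{n-i} \ge 1 - \frac{2k}{n}.
$$

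Multiplying these $j$ factors yields $\binom{n-k}{j} \ge (1-2k/n)^j \binom{n}{j}$. Since $0 \le 1-2k/n \le 1$ (in the regime the lemma is actually applied, $k \ll n/2$) and $j \le d$, I can replace the exponent $j$ by $d$ to obtain the uniform estimate $\binom{n-k}{j} \ge (1-2k/n)^d \binom{n}{j}$. Summing over $j=0,1,\ldots,d$ gives the claimed lower bound
$$
\binom{n-k}{\le d} \ge \Big(1-\frac{2k}{n}\Big)^d \binom{n}{\le d}.
$$

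The only potential obstacle is the step that replaces the exponent $j$ by $d$, which requires $1 - 2k/n \ge 0$; this is the mild loss we accept when we pass from the sharper per-term bound to a clean uniform factor independent of $j$. In the application of this lemma in Section~\ref{s: lower bound} one takes $k$ of order $n/\log n$, so $2k/n = o(1)$ and the restriction is harmless. Otherwise the argument is a purely elementary manipulation of binomial ratios, with the constraint $d \le n/2$ used exactly once to ensure that all denominators $n-i$ exceed $n/2$.
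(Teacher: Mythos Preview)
Your argument is correct and follows essentially the same route as the paper's proof: both establish the term-wise inequality $\binom{n-k}{j} \ge (1-2k/n)^j \binom{n}{j}$ by writing the ratio as $\prod_{i=0}^{j-1}(1-k/(n-i))$ and using $n-i \ge n-d+1 > n/2$, then pass from exponent $j$ to the uniform exponent $d$ before summing. You are in fact more explicit than the paper about the tacit requirement $1-2k/n \ge 0$ (equivalently $k \le n/2$) needed for that last replacement; the paper's proof silently assumes the same thing, and as you observe it is harmless in the only application, where $k$ is of order $n/\log n$.
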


\begin{proof}
Let us first prove a similar but simpler fact for binomial coefficients: 
\begin{equation}	\label{eq: binom defect} 
\Big( 1-\frac{2k}{n} \Big)^d \binom{n}{d} \le \binom{n-k}{d} \le \binom{n}{d} 	
\end{equation}
The upper bound is non-trivial. To prove the lower bound,
combine the terms in the definition of the binomial coefficients and get:
$$
\frac{\binom{n-k}{d}}{\binom{n}{d}}
= \prod_{i=0}^{d-1} \Big( 1 - \frac{k}{n-i} \Big)
\ge \Big( 1 - \frac{k}{n-d+1} \Big)^d.
$$
Since $n-d+1 \ge n/2$, the lower bound in \eqref{eq: binom defect} follows.

Next, we can deduce the conclusion of the lemma from \eqref{eq: binom defect} as follows:
$$
\sum_{i=1}^d \binom{n-k}{i}
\ge \sum_{i=1}^d \Big( 1-\frac{2k}{n} \Big)^i \binom{n}{i}
\ge \Big( 1-\frac{2k}{n} \Big)^d \sum_{i=1}^d \binom{n}{i},
$$
which is the lower bound we claimed.
\end{proof}

\begin{lemma}	\label{lem: binomial sums}
  For any integers $1 \le d \le n/2$, we have:
  $$
  \binom{n}{\le d} \le \binom{n}{d} \frac{n+1-d}{n+1-2d}.
  $$
\end{lemma}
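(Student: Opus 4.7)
The plan is to factor $\binom{n}{d}$ out of the binomial sum and then bound the remaining sum by a geometric series. Writing
$$\binom{n}{\le d} = \sum_{i=0}^{d} \binom{n}{i} = \binom{n}{d} \sum_{j=0}^{d} \frac{\binom{n}{d-j}}{\binom{n}{d}},$$
the key identity to exploit is the telescoping ratio
$$\frac{\binom{n}{d-j}}{\binom{n}{d}} = \prod_{k=0}^{j-1} \frac{d-k}{n-d+k+1}.$$

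Next I would bound each factor in this product uniformly. Since the numerator $d-k$ is non-increasing in $k$ while the denominator $n-d+k+1$ is non-decreasing, every factor is bounded above by the $k=0$ factor $d/(n-d+1)$. This gives
$$\frac{\binom{n}{d-j}}{\binom{n}{d}} \le \Big( \frac{d}{n-d+1} \Big)^{j}.$$
Setting $r \coloneqq d/(n-d+1)$, the assumption $d \le n/2$ yields $2d \le n < n+1$, hence $r < 1$. Summing a geometric series (and extending to infinity only loses slack),
$$\sum_{j=0}^{d} r^{j} \le \frac{1}{1-r} = \frac{n-d+1}{n-2d+1} = \frac{n+1-d}{n+1-2d},$$
which combined with the previous display gives exactly the claimed bound.

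There is essentially no obstacle here; the only thing to verify is $r<1$, which is immediate from $d \le n/2$. The proof is genuinely two lines once the factoring trick is in place, and the estimate is sharp up to the geometric-series slack introduced by summing to infinity rather than to $d$.
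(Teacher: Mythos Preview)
Your proof is correct and is essentially identical to the paper's own argument: both factor out $\binom{n}{d}$, write the remaining terms as products of ratios $\frac{d-k}{n-d+k+1}$, bound each factor by the first one $r = d/(n-d+1)$, and sum the resulting geometric series to obtain $\frac{n+1-d}{n+1-2d}$.
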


\begin{proof}
Let us bound the ratio
\begin{align*}
\frac{\binom{n}{\le d}}{\binom{n}{d}} 
  &= \frac{\binom{n}{d} + \binom{n}{d-1} + \binom{n}{d-2} + \cdots}{\binom{n}{d}} \\
  &= 1 + \frac{d}{n-d+1} + \left( \frac{d}{n-d+1} \right) \left( \frac{d-1}{n-d+2} \right) + \cdots \\
  &\le 1 + \frac{d}{n-d+1} + \left( \frac{d}{n-d+1} \right)^2 + \cdots \\
  &= \frac{n+1-d}{n+1-2d} \quad \text{(by summing the geometric series).}
\end{align*}
This proves the lemma.
\end{proof}

\end{document}